\let\amsamp=&
\tikzset{
  partial ellipse/.style args={#1:#2:#3}{
    insert path={+ (#1:#3) arc (#1:#2:#3)}
  }
}
\gdef\pampmatrix{%
  \begingroup 
  \let&=\amsamp
  \setlength\arraycolsep{1pt} 
  \begin{bmatrix}%
}
\gdef\endpampmatrix{\end{bmatrix}\endgroup}
\definecolor{lred}{RGB}{228,78,97}
\definecolor{lblue}{RGB}{0,179,230}
\definecolor{dblue}{RGB}{20,95,170}
\definecolor{cblue}{rgb}{0.39, 0.58, 0.93}
\definecolor{cgreen}{HTML}{43AA8B}
\newcommand{\Mdef}[2]{\newcommand{#1}{\relax \ifmmode #2 \else $#2$\fi}}
\Mdef{\F}{\mathbb{F}}
\Mdef{\M}{\mathcal{M}}
\Mdef{\N}{\mathbb{N}}
\Mdef{\R}{\mathbb{R}}
\Mdef{\Rop}{\mathbb{R}^{\mathrm{op}}}
\Mdef{\Z}{\mathbb{Z}}
\Mdef{\Q}{\mathbb{Q}}
\Mdef{\cC}{\mathcal{C}}
\Mdef{\cM}{\mathcal{M}}
\Mdef{\cW}{\mathcal{W}}
\Mdef{\cF}{\mathcal{F}}
\newcommand{\cat}[1]{\mathbf{#1}}
\newcommand{\IntI}{\cat{Int_{I}}}
\newcommand{\vect}{\cat{Vect_k}}
\newcommand{\uR}{\cat{R}}
\Mdef{\uD}{\mathbf{\Delta}^2}
\newcommand{\Top}{\cat{Top}}
\newcommand{\ra}{\rightarrow}
\newcommand{\incl}{\hookrightarrow}
\newcommand{\xto}{\xrightarrow}
\newcommand{\isom}{\cong}
\newtheorem{theorem}{Theorem}[section]
\newtheorem{proposition}[theorem]{Proposition}
\newtheorem{remark}[theorem]{Remark}
\newtheorem{definition}[theorem]{Definition}
\newtheorem{example}[theorem]{Example}
\DeclareMathOperator{\rank}{rank}
\title{Multiparameter persistent homology via generalized Morse theory}
\author{Peter Bubenik and Michael J. Catanzaro}
\date{}
\begin{document}
\maketitle

\begin{abstract}
  We define a class of multiparameter persistence modules that arise from a one-parameter family of functions on a topological space
  and prove that these persistence modules are stable.
  We show that this construction can produce indecomposable persistence modules with arbitrarily large dimension.
    In the case of smooth functions on a compact manifold, we apply cobordism theory and Cerf theory to study the resulting persistence modules.
  We give examples in which we obtain a complete description of the persistence module as a direct sum of indecomposable summands and provide a corresponding visualization.
\end{abstract}


\section{Introduction}
Persistent homology is an important tool in topological data analysis, whose goal is to use ideas from topology to understand the `shape of data'~\cite{ghrist:survey,carlsson:topologyAndData}. 

An important example of persistent homology starts with a smooth, compact manifold $M$ and a Morse function $f:M \to \R$.
Classical Morse theory concerns itself
with the study of $M$ via the critical points of $f$ by analyzing the sublevel
sets $F(a) = f^{-1}(\infty,a]$ and how their topology changes as $a$ varies. 
The subspaces $\{F(a)\}_{a \in \R}$ and their inclusion maps may be used to define a functor $F: \cat{R} \to \cat{Top}$, where $\cat{R}$ is the category given by the linear order on $\R$ and $\cat{Top}$ is the category of topological spaces and continuous maps.
Composing with singular homology in some degree $j$ and coefficients in a field $k$ we obtain a functor $H_{j}F: \cat{R} \to \cat{Vect_k}$ 
with codomain the category of $k$-vector spaces and $k$-linear maps.
Such a functor is called a persistence module.

  Let $\beta_j$ denote the $j$-th Betti number of $M$ and let $M_j$ denote the number of critical points of index of $j$ of $f$. Let $M(t) = \sum_j M_j t^j$ and $\beta(t) = \sum_j \beta_j t^j$. Morse observed that $M(t)-\beta(t) = (1+t)D(t)$ for some polynomial $D(t)$ with non-negative coefficients~\cite{Morse:1925}. That is, the excess of critical points of the Morse function come in pairs that differ in index by one.
  A strengthening of this observation is a central result in persistent homology. The persistence module $H_{j} F$ decomposes into a direct sum of indecomposable summands given by one-dimensional vector spaces supported on an interval. The end points of these intervals are exactly the critical values of the paired critical points in Morse's theorem.
  This pairing of critical values, called the persistence diagram, is central to persistent homology.


While this setting has been very successful, in many applications the data are
best described not by a single function $f:M \to \R$ but by a one-parameter
family of functions $f_t:M \to \R$, where $t \in I = [0,1]$. For example, one
may handle noise in the data with a procedure dependent on a parameter $t$.
The resulting homological data may be encoded in a multiparameter persistence
module.  However, in general this module does not decompose into one-dimensional summands,
and there is no complete invariant analogous to the persistence
diagram~\cite{carlsson_theory_2009}.


We approach multiparameter persistent homology using two distinct generalizations of Morse theory. The
first is a parametrized approach to Morse theory, known as Cerf theory. Cerf
theory was initiated by J. Cerf in his celebrated proof of the
Pseudo-Isotopy Theorem~\cite{cer:str:1970}.  One outcome of his work was a useful
stratification on the space of all smooth functions on a smooth compact manifold, 
stratified by singularity type. The existence of this stratification implies that generic, 1-parameter
families of smooth functions are almost always Morse, except for finitely many
parameter values at which the function may have cubic, or `birth-death' type, singularities.
Cerf developed a convenient framework for understanding how singularities 
merge, split, and pass one another in families. This understanding is paramount
for our analysis.

The second variant is Morse theory adapted to the case of manifolds with
boundary. This was developed around the same time as Cerf theory but by several authors  
independently~\cite{hajduk_minimal_1981,braess_morse-theorie_1974,jankowski_functions_1972}.
Many statements in classical Morse theory can be adapted to manifolds with
boundary, so long as the gradient or gradient-like flow used is tangent to the boundary. 
Critical points occurring on the interior behave as one expects, but on the boundary
they come in two distinct flavors, either stable or unstable, depending on the
local flow. Altogether, Morse theory for manifolds with boundary is a 
powerful extension of its classical analog.
We have only touched
the surface of using this subject in multiparameter persistence.

\subsection*{Our contributions}

Given a
topological space
$X$ and a
one-parameter family of
(not necessarily continuous)
functions $\tilde f:I\times X \to \R$, we define a
fibered version of $\tilde f$ by letting $f:I \times X \to I \times \R$ be given by
$f(t,x) = (t, \tilde f(t,x))$.
The collection of subspaces
\begin{equation} 
  F(a,b,c) = f^{-1}([a,b] \times (-\infty, c]) \subset I \times X \, ,
  \label{eqn:collection_of_spaces}
\end{equation}
for $0 \leq a \leq b \leq 1$ and $c \in \R$, are
our main objects of study.
For $[a,b] \subset [a',b']$ and $c \leq c'$ there is an inclusion of $F(a,b,c)$ into $F(a',b',c')$.
These topological spaces and continuous maps describe a functor
$F: \cat{Int_I \times R} \to \cat{Top}$, where $\cat{Int_I \times R}$ is the category given by the product of the partial order on the closed intervals in $I$ and the linear order on $\R$.
Composing with singular homology in some degree $j$ with coefficients in a field $k$, we obtain a functor $H_{j} F: \cat{Int_I \times R} \to \cat{Vect_k}$.
This functor is a multiparameter persistence module.

We prove that this functor is stable with respect to the interleaving distance for perturbations of the one-parameter family of smooth functions (Theorem~\ref{thm:stability}).

We consider several examples of such one-parameter families of functions and give complete descriptions of their multiparameter persistence modules
(Sections \ref{sec:pmod_func}, \ref{sec:indecomposable}, \ref{sec:cyl}, and \ref{sec:wrinkled-cylinder}).
In particular, we give decompositions of these modules into their indecomposable summands and provide corresponding visualizations
(Figures \ref{fig:pm-hat}, \ref{fig:pm-cylinder}, and \ref{fig:wrink_cyl_expl}).
We also show that indecomposable persistent modules arising from one-parameter families of functions may have arbitrarily large dimension (Section~\ref{sec:arb-dim}).

Now consider the case where $X$ is a smooth compact manifold and $\tilde{f}$ is smooth.
  Let $F_0(a,b,c) = f^{-1}([a,b] \times \{c\})$.
  We prove (\cref{prop:morse_proj}) that
for generic
$a$, $b$, and $c$,
$(F(a,b,c),F_0(a,b,c))$
forms a cobordism between the manifolds with boundary
$(F(a,a,c),F_0(a,a,c))$ and $(F(b,b,c),F_0(b,b,c)$.
Furthermore, it
is naturally equipped
with a Morse function
    \[ 
      \pi_{[a,b]}: F(a,b,c) \to [a,b] \, , 
    \]
given by projection onto the interval $[a,b]$.
This Morse function has no critical points on $F(a,b,c) \setminus F_0(a,b,c)$ and the positive and negative critical points on $F_0(a,b,c)$
(\cref{def:positive-negative}) correspond to boundary stable and boundary unstable critical points, respectively.

We remark that if we restrict
our collection of subspaces in \eqref{eqn:collection_of_spaces} to those with $a=0$ then we obtain a multiparameter persistence module indexed by $I \times \R \subset \R^2$, with the usual product partial order.
However, this two-parameter persistence module is a weaker invariant than our three-parameter persistence module.
  For example, if $X$ is the one-point space then our persistence module is a complete invariant of 1-parameter families of functions, but the weaker invariant is not.

We also remark that in applications computing the  full set of critical values (the Cerf diagram -- \cref{sec:cerf}) should not be considered to be a prerequisite. In the classical situation of sublevel set persistent homology of a single smooth function (e.g. a sum of a large number of Gaussians), instead of computing the set of critical values, one computes the sublevel set persistent homology of a piecewise linear approximation.

\subsection*{Related work}

  Much of the recent work on multiparameter persistent homology focuses on
  either its algebraic structure, for example~\cite{harrington_stratifying_2019,Bubenik:2021,Miller:2017,Bubenik:2018}, or its
  computational challenges, for
  example~\cite{lesnickWright:rivet,CFKLW:19,Kerber:2018a,Vipond:2020}.  
  Other authors have begun to take a more geometric approach similar to
  our own, such as~\cite{MacPherson:2021,grady_natural_2020}.  There are two recent
  geometric approaches that are related but still distinct.
  In~\cite{budney_bi-filtrations_2021}, the authors use handlebody
  theory to understand bi-filtrations arising from preimages of 2-Morse functions $f:M \to \R^2$.
  In a simliar vein, the authors of~\cite{k_biparametric_2021}
  use singularities of maps $M \to \R^2$ to understand preimages in $M$. 

\subsection*{Motivation}

  While our framework is theoretical, we are motivated by applications. 
  We highlight two examples: kernel density estimation and kernel regression.

\subsubsection*{Kernel Density Estimation}
  
Suppose $\{x_1, x_2, \ldots, x_n\} \subset \R^d$ are samples drawn
  independently from an unknown density function $f$.
  An empirical estimator of the density is obtained by the average of bump functions centered at each $x_i$. The bump functions are translations of a bump function, $K$, centered at the origin called a \emph{kernel}.
  That is,
  \begin{equation*}
    \hat{f}_{\alpha}(x) = \frac{1}{n\alpha} \sum_{i=1}^n K \left( \frac{x-x_i}{\alpha} \right),
    \end{equation*}
    where the parameter $\alpha$ is called the \emph{bandwidth}.
    A standard choice is the Gaussian kernel, $K(x) = \frac{1}{(2\pi)^{d/2}} \exp(-{\lVert x \rVert}^2/2)$.
    Other examples include the Epanechnikov and triangular kernels, which appear (up to rescaling) as the functions $g(t)$ and $\tilde g(t)$,
    respectively, of Section~\ref{sec:pmod_func}. 


Properties of the kernel density estimator $\hat{f}$, such as the number of modes (i.e. local maxima), depend on the bandwidth $\alpha$.
  In order to obtain a global understanding of these properties for various of $\alpha$ and how they interact, 
  we consider the one-parameter family of functions $\tilde{g} = -\hat f: \R \times I \to \R$, where $\tilde{g}(t,x) = -\hat{f}_t(x)$ and $I$ is some bounded interval of parameter values. We obtain a collection of spaces, $G$, given by \eqref{eqn:collection_of_spaces} and its associated multiparameter persistence modules, $H_j G: \cat{Int_I} \times \cat{R} \to \cat{Vect_K}$.
  We may use $H_0G$ for a functorial analysis of the estimation of the modes of $f$. 
%
  In particular, the dimension of
  $H_0G(\alpha,\alpha,-c)$ equals number of connected components of
  the superlevel set $f_{\alpha}^{-1}([c,\infty))$.  Furthermore, the linear maps $H_0G(a,a,-c) \rightarrow H_0G(a,b,-c) \leftarrow H_0G(b,b,-c)$
  allow one to study the persistence of these connected components.

\subsubsection*{Kernel regression}

Closely related to kernel density estimation, is  kernel regression.
 Suppose we are given data $\{(x_1,y_1), \ldots, (x_n,y_n)\} \subset \R^d \times \R$ sampled from the graph of some unknown function $f:\R^d \to \R$. Consider the Nadaraya–Watson estimator
  \begin{equation*}
    \hat{f}_{\alpha}(x) = \frac{\sum_{i=1}^n K_{\alpha}(x-x_i) y_i}{\sum_{i=1}^n K_{\alpha}(x-x_i)}.
  \end{equation*}
  In the same way as for kernel density estimation, we obtain a one-parameter family of functions and associated persistence modules.
  




\subsection*{Outline}

  The paper is organized as follows. In Section~\ref{sec:prelim}, we recall
definitions from geometric topology and Cerf theory. We define our primary objects
of study including our multiparameter persistence modules in Section~\ref{sec:1-param}.
In Section~\ref{sec:examples}, we provide several examples of one-parameter
families of functions on manifolds, visualizations of the relevant cobordisms, and
analyze the multiparameter persistence modules. Finally in Section~\ref{sec:extra}, we
prove our main theoretical result that $F(a,b,c)$ is generically equipped with a Morse function
and analyze its critical points.

\section{Background}
\label{sec:prelim}
We start with providing some background from geometric topology.

\subsection{Manifolds with corners}

There are several different, inequivalent notions of manifolds with corners and
smooth maps between them in the differential topology
literature. The following is a brief summary of~\cite{joyce_manifolds_2012}.
Let $H^n_k = \{ (x_1, x_2, \ldots, x_n) \, | \, x_1, x_2, \ldots, x_k \geq 0\}$.
In particular, $H^n_0 = \R^n$ and $H^n_1 = [0,\infty) \times \R^{n-1}$.
\begin{definition}[{\cite[Definition 2.1]{joyce_manifolds_2012}}]
  Let $M$ be a second countable Hausdorff space. 
  \begin{itemize}
    \item An \emph{$n$-dimensional chart on $M$ without boundary} is a pair $(U,\psi)$, where
      $U$ is an open subset of $\R^n$ and $\psi: U \to M$ is a homeomorphism onto a nonempty open
      set $\psi(U)$.
    \item An \emph{$n$-dimensional chart on $M$ with boundary} for $n \geq 1$ is a pair $(U, \psi)$, 
      where $U$ is an open subset in $\R^n$ or $H^n_1$, and $\psi:U \to M$ is a homeomorphism
      onto a nonempty open set $\psi(U)$.
    \item An \emph{$n$-dimensional chart of $M$ with corners} for $n \geq 1$ is a pair $(U, \psi)$,
      where $U$ is an open subset of $H^n_k$ for $0 \leq k \leq n$, and $\psi: U \to M$ is a 
      homeomorphism onto a nonempty open subset $\psi(U)$.
  \end{itemize}
\end{definition}

\begin{definition}
  For $X \subset \R^n$ and $Y \subset \R^m$, a map $f : X \to Y$ is \emph{smooth} if it can be extended to
  a smooth map between open neighborhoods of $X$ and $Y$. If $m = n$ and $f^{-1}$ is also smooth, then
  $f$ is a \emph{diffeomorphism}.
\end{definition}

\begin{definition}
  An \emph{$n$-dimensional atlas for $M$ without boundary, with boundary, or with corners} 
  is a collection of $n$-dimensional charts without boundary, with boundary, or with corners
  $\{(U_j, \psi_j) \, | \, j \in J\}$ on $M$ such that $M = \cup_j \psi_j(M)$ and are
  compatible in the following sense: $\psi_j \circ \psi_k^{-1}: \psi_k^{-1}( \psi_j(U_j) \cap
  \psi_k(U_k)) \to \psi_j^{-1}(\psi_j(U_j) \cap \psi_k(U_k))$ is a
    diffeomorphism. An atlas is \emph{maximal} if it is not a proper subset of any
  other atlas.  
\end{definition}

\begin{definition}
  An \emph{$n$-dimensional manifold without boundary, with boundary, or with corners} is a second countable
  Hausdorff space $M$ together with a maximal $n$-dimensional atlas of charts without boundary, with boundary, or
  with corners.
\end{definition}

\begin{example}
  The space $\Omega$ of Figure~\ref{fig:cob_bdy} provides an example of a manifold with corners. 
  There are six corner points (with neighborhoods homeomorphic to $H_2^2$) at the intersections of $V_0$, $V_1$, and $Y$. The spaces
  $V_0$, $V_1$, and $Y$ are examples of manifolds with boundary. Their interiors, as well
  as the interior of $\Omega$, are examples of manifolds without boundary.
\end{example}

\subsection{Generalized Morse functions}

Morse theory provides powerful methods for understanding manifolds through the lens of smooth functions. 
Classical Morse theory concerns the study of smooth, compact manifolds without boundary
and allows for a transformation from smooth, continuous data (manifolds)
to discrete data (critical points and values). An adaptation to Morse theory for manifolds
with boundary extends this to the setting of cobordisms. Another generalization 
we will consider, known as Cerf theory, generalizes
this to the study of one-parameter families of functions. 
The remainder of this subsection is a summary and restatement of ideas 
from \cite[\S 1]{eli:wrinkling:1997} and \cite{mil:mor:1963}.

Let $M$ and $Q$ be smooth, compact manifolds of dimension $n$ and $q$, respectively, and let $f:M \to Q$ be a smooth map.
A point $p \in M$ is a {\em critical point} or {\em singular point}, if $\mathrm{rank}$ $d_pf=0$ or
\[
\mathrm{rank}\, d_pf < \min(n,q) \, .
\]
The set of all critical points of $f$ is denoted $\Sigma(f)$.

Assume $n \geq q$.
A point $p \in \Sigma(f)$ is a {\em fold singularity of index $j$} (see Figure~\ref{fig:fold-singularity}) if for some choice of local coordinates 
near $p$, the map $f$ is given by
\[
    \phi: \R^{q-1} \times \R^{n-q+1} \to  \R^{q-1} \times \R 
\]
\begin{equation}
    (t,x) \mapsto \left(t, -\sum_{i=1}^j x_i^2 + \sum_{i=j+1}^{n-q+1} x_i^2 \right) \,\, ,
  \label{eqn:fold_sing}
\end{equation}
where $t \in \R^{q-1}$ and $x = (x_1,x_2, \ldots, x_{n-q+1}) \in \R^{n-q+1}$. 
Let $\Sigma^{10}(f)$ be the set of all fold singularities.

For $q>1$, a point $p \in \Sigma(f)$ is a {\em cusp singularity of index
$j+1/2$} (see Figure~\ref{fig:cusp-singularity}) if for some choice of local coordinates near $p$, the map $f$ is given by
\[
    \psi: \R^{q-1} \times \R \times \R^{n-q} \to \R^{q-1} \times \R  
\]
\begin{equation}
    (t,z,x) \mapsto \left(t, x^3 +3t_1z - \sum_{i=1}^j x_i^2 + \sum_{i=j+1}^{n-q}x_i^2\right) \,\, ,
\label{eqn:cusp_sing}
\end{equation}
where $t = (t_1,t_2, \ldots, t_{q-1}) \in \R^{q-1}$, $z \in \R$, and $x = (x_1,x_2,\ldots,x_{n-q}) \in \R^{n-q}$.
Set $\Sigma^{11}(f)$ to be the set of all
cusp singularities. Finally let $\Sigma^1(f) = \Sigma^{10}(f) \cup \Sigma^{11}(f)$.

\begin{remark}
	Consider the case $q=1$ and $Q \subset \R$, so that all terms of
	Eq.~\eqref{eqn:fold_sing} involving $t$ vanish.  In this case the fold
	singularities of $f:M \ra Q$ coincide with non-degenerate critical
	points as in usual Morse theory. If such an $f$ has only fold
	singularities, then $f$ is known as a Morse function.  
\end{remark}

\begin{remark} \label{rem:birth-death}
  Both fold and cusp singularities are locally fibered over $\R^{q-1}$ in the sense
  that the following commute
  \begin{equation*}
    \begin{minipage}{0.5\textwidth}
      \centering
      \begin{tikzcd}
          \R^{q-1} \times \R^{n-q+1}
	\arrow[r, "\phi"]
	\arrow[rd, "\pi"']
	&
    \R^{q-1} \times \R
	\arrow[d, "\pi"]
	\\
	& 
	\R^{q-1}
      \end{tikzcd}
    \end{minipage}
    \begin{minipage}{0.5\textwidth}
      \centering
      \begin{tikzcd}
          \R^{q-1} \times \R \times \R^{n-q} 
	\arrow[r, "\psi"]
	\arrow[rd, "\pi"']
	&
    \R^{q-1} \times	\R
	\arrow[d, "\pi"]
	\\
	& 
	\R^{q-1} \, ,
      \end{tikzcd}
    \end{minipage}
    \label{eq:fibered}
  \end{equation*}
  where $\pi$ is the projection onto $\R^{q-1}$.  A single fibered function can
  be interpreted as a family of functions $\phi_t:\R^{n-q+1} \ra \R$ or $\psi_t:
  \R \times \R^{n-q} \ra \R$, indexed over $t \in \R^{q-1}$. In this language,
  the folds are constant families (see Figure~\ref{fig:fold-singularity}).  The cusps consist of families of functions
  with two non-degenerate critical points of index $j$ and $j+1$ for $t_1 < 0$,
  no critical points for $t_1 > 0$, and a cubic or `birth-death' singularity of index $j + 1/2$
  for $t_1 = 0$ (see Figure~\ref{fig:cusp-singularity}).  
\end{remark}

\begin{figure}[h]
  \centering
  \begin{subfigure}[t]{0.45\textwidth}
    \centering
    \tdplotsetmaincoords{85}{120}
    \begin{tikzpicture}[tdplot_main_coords]
      \draw[domain=-1.8:0, thick, variable=\x,name path=LB] plot (\x,0,-\x*\x);
      \draw[domain=-1.8:0, thick, variable=\x,name path = RB] plot (\x,3,-\x*\x);
      \draw[domain=0:2, thick, variable=\x,name path=LF] plot (\x,0,-\x*\x);
      \draw[domain=0:2, thick, variable=\x,name path = RF] plot (\x,3,-\x*\x);
      \tikzfillbetween[of=LF and RF]{gray,opacity=0.2};
      \tikzfillbetween[of=LB and RB]{gray,opacity=0.2};
    \end{tikzpicture}
    \caption{}
    \label{fig:fold-singularity}
  \end{subfigure}
  \begin{subfigure}[t]{0.45\textwidth}
    \centering
    \tdplotsetmaincoords{85}{120}
    \begin{tikzpicture}[tdplot_main_coords]
      \draw[domain=-1.5:-0.511, thick, variable=\x, name path = ULB] plot (\x,-2,\x*\x*\x-0.8*\x);
      \draw[domain=-0.511:0.511, thick, variable=\x, name path = MLB] plot (\x,-2,\x*\x*\x-0.8*\x);
      \draw[domain=0.511:1.5, thick, variable=\x, name path = BLB] plot (\x,-2,\x*\x*\x-0.8*\x);
      \draw[domain=-1.27:0,densely dotted, variable=\x, name path = UMB] plot (\x,0,\x*\x*\x);
      \draw[domain=0:1.31, densely dotted, variable=\x, name path = LMB] plot (\x,0,\x*\x*\x);
      \draw[domain=-1.11:0, thick, variable=\x, name path = URF] plot (\x,2,\x*\x*\x+0.5*\x);
      \draw[domain=0:1.2, thick, variable=\x, name path = LRF] plot (\x,2,\x*\x*\x+0.5*\x);
      \tikzfillbetween[of=ULB and UMB]{gray,opacity=0.2};
      \tikzfillbetween[of=BLB and LMB]{gray,opacity=0.2};
      \tikzfillbetween[of=LRF and LMB]{gray,opacity=0.2};
      \tikzfillbetween[of=URF and UMB]{gray,opacity=0.2};
    \end{tikzpicture}
    \caption{}
    \label{fig:cusp-singularity}
  \end{subfigure}
  \caption{
  Local models for (a) a fold singularity and (b) a cusp singularity. The
  middle slice of the cusp singularity has a cubic singularity. This 
  is often referred to as a `birth-death' singularity, since 
  the two critical points to the left can be viewed as being ``born'' (moving right to left)
  or as ``dying'' (moving left to right).}
  \label{fig:fold_bd}
\end{figure}
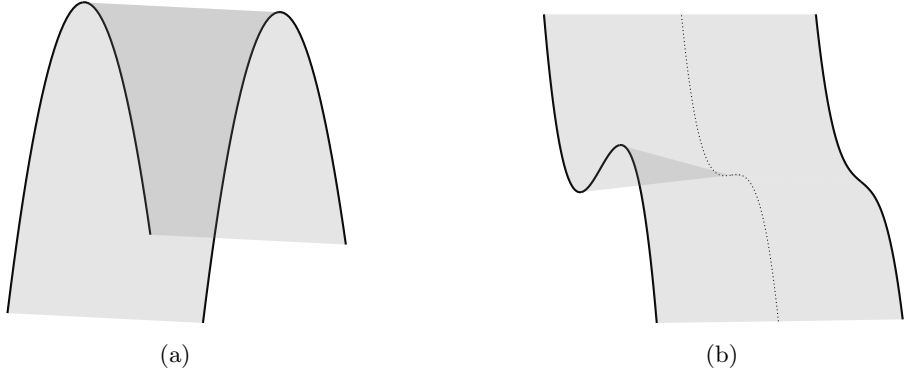

\subsection{Cobordisms}

In this section, we recall some basic notions from Morse theory
for manifolds with boundary from the well-written summary of~\cite{bor:mor:2016}. The reader may also consult
some of the original sources, such as~\cite{jan:fun:1972,braess_morse-theorie_1974, hajduk_minimal_1981}.
We will assume that all manifolds (with or without boundaries or corners) are smooth.

Let $V_0$ and $V_1$ denote two compact $n$-manifolds with boundaries $\partial V_0$
and $\partial V_1$, respectively. Let $\Omega$ be a compact $(n+1)$-manifold with corners,
$\partial \Omega = Y \cup V_0 \cup V_1$, where
$V_0 \cap V_1 = \varnothing$, and $Y \cap V_0 = \partial V_0$, $Y \cap V_1 = \partial V_1$.
In this case, we say $(\Omega,Y)$ is a {\em cobordism} between $(V_0,\partial V_{0})$
and $(V_1,\partial V_1)$.
See Figure~\ref{fig:cob_bdy}.
%
%
Such a cobordism is a {\em left-product cobordism}
if $\Omega$ is diffeomorphic to $V_0 \times [0,1]$, 
or a {\em right-product cobordism} if
$\Omega$ is diffeomorphic to $V_1 \times [0,1]$.

\begin{figure}[h]
  \centering
  \tdplotsetmaincoords{80}{120}
  \begin{tikzpicture}[tdplot_main_coords,scale=0.75]
    \draw[domain=-2:1.8, thick, variable=\x,name path=LB,lblue] plot (\x-1.5,-4,\x*\x+0.5);
    \node[left] at (-2,-5,1) {$V_0$};
    \draw[domain=-1.5:1.3, thick, variable=\x,name path = RB,cgreen] plot (\x-2,14.5,\x*\x+4.3);
    \draw[domain=-1.5:1.3, thick, variable=\x,name path = RF,cgreen] plot (\x+2,14.5,\x*\x+1.3);
    \node[right] at (0,15.5,3.5) {$V_1$};
    \draw[thick,lred] (-2,0,9) -- (-3.5,12,9);
    \draw[thick,lred] (1.8,0,8.24) -- (3.3,12,8);
    \draw[thick, variable=\y,domain=-1:1,lred] plot(\y,{(2*\y*\y+10.5},8.5);
    \node[above] at (0,13,10) {$Y$};
    \draw (0,0,1) -- (2,12,1); 
    \draw (0,9,4.8) -- (0,5,3.3); 
    \draw (0,10,4) -- (-2,12,4); 
    \draw[dotted] (0,10,4) -- (0,5,3.3);
    \draw[thick,lred]  (-2,0,5) -- (-3.5,12,6.25);
    \draw[thick,lred] (1.8,0,4.24) -- (3.3,12,2.69);
    \draw[thick,lred] (-0.7,12,5.69) .. controls (0,8,5) .. (0.5,12,3.25);
    \draw[domain=0:1.8, thick, variable=\x,name path = RF,lblue] plot (\x,0,\x*\x+1);
    \draw[domain=-2:0, thick, variable=\x,name path = RB,lblue] plot (\x,0,\x*\x+1);
    \draw[domain=0:1.3, thick, variable=\x,name path = RF,cgreen] plot (\x+2,12,\x*\x+1);
    \draw[domain=0:1.3, thick, variable=\x,name path = RF,cgreen] plot (\x-2,12,\x*\x+4);
    \draw[domain=-1.5:0, thick, variable=\x,name path = RB,cgreen] plot (\x+2,12,\x*\x+1);
    \draw[domain=-1.5:0, thick, variable=\x,name path = RB,cgreen] plot (\x-2,12,\x*\x+4);
    \node[below] at (0,2,0.8) {$\Omega$};
    \end{tikzpicture}
    \caption{ A manifold with corners $\Omega$ provides a cobordism
      between two manifolds with boundary $V_0$ and $V_1$. The
    boundary $\partial V_0$ consists of two points and $\partial V_1$ consists of
    four points. The manifold with boundary $Y$ can be viewed as a cobordism
    between $\partial V_0$ and $\partial V_1$. Furthermore, $\Omega$ is a left-product
    cobordism.} 
    \label{fig:cob_bdy}
  \end{figure}

  Fixing a Riemannian metric on $\Omega$ allows us to consider the gradient
  $\nabla F$ of a smooth function $F:\Omega \ra [a,b]$.  A critical point $z$ of
  $F$ is {\em Morse} if the Hessian of $F$ at $z$ is non-degenerate.  The
  function $F$ is a {\em Morse function} on the cobordism $(\Omega, Y)$ if
  $F^{-1}(a) = V_0$, $F^{-1}(b) = V_1$,
  there are no critical points on $V_0 \cup V_1$, $F$
  only has Morse critical points, and $\nabla F$ is everywhere tangent to $Y$.

  The {\em unstable manifold} $W^u_z$ of
  a critical point $z$ is the set of all points which flow out from $z$ under $\nabla F$:
  \[
  W^u_z =  \{ x \, | \, \lim_{t \to -\infty} \Phi_t(x) = z\} \, ,
  \] 
  where $\Phi_t$ is the flow generated by $\nabla F$. With the same notation, the {\em stable
  manifold} $W^s_z$ of a critical point $z$ is given by
  \[
  W^s_z = \{ x \, | \, \lim_{t \to \infty} \Phi_t(x) = z\} \, .
  \]
  The stable and unstable manifolds are locally embedded disks~\cite{irwin_stable_1970}.

  Unlike usual Morse theory, the critical points for a Morse function on a
  manifold with boundary come in a variety of types.  If $z$ is a critical point
  and $z \in Y$, then $z$ is called a \emph{boundary critical point}.  Otherwise,
  $z$ is called an \emph{interior critical point}.  We are primarily interested in boundary critical
  points, of which there are again two types, determined by the gradient flow. A boundary
  critical point is {\em boundary stable} if $T_z W^{u}_z \subset T_z Y$; otherwise it is 
  {\em boundary unstable}.

  As usual, the {\em index} of a boundary critical point $z$ is defined as the dimension
  of the stable manifold $W^s_z$.
  If $z$ is boundary stable, then the index of $z$ is the usual index of $F|_Y$ plus one.
  On the other hand,
  if $z$ is boundary unstable, then the index of $z$ coincides
  with the usual notion of index of the restriction $F|_Y$.
  See Example~\ref{ex:Omega}.

  \begin{remark}
    Note that there are no boundary unstable critical points of index $n+1$, or boundary
    stable critical points of index $0$.
  \end{remark}

  \begin{remark}
	  \label{rem:flow_swap}
      We consider the flow generated by $\nabla F$, as is frequently used in most mathematics
      literature. In other areas such as dynamical systems and physics, the flow
      generated by $-\nabla F$ is commonly used. The two versions are equivalent, since the
      stable and unstable manifolds swap after replacing the flow generated by $\nabla F$
      with that generated by $-\nabla F$.
  \end{remark}

  \begin{proposition}[{\cite[Lem 2.10, Thm 2.27, Prop 2.38]{bor:mor:2016}}]
    \label{prop:morse-thy-bdy-cob}
    Let $(\Omega,Y)$ be a cobordism between $(V_0,\partial V_0)$ and $(V_1,\partial V_1)$.
    \begin{itemize}
      \item If $(\Omega,Y)$ admits a Morse function whose critical points are all 
	boundary stable, then $(\Omega,Y)$ is a left-product cobordism. 
      \item If $(\Omega,Y)$ admits a Morse function whose critical points are all boundary
	unstable, then $(\Omega,Y)$ is a right-product cobordism.
      \item If $(\Omega,Y)$ admits a Morse function with no critical points,
	      then $(\Omega,Y)$ is both a left- and right-product cobordism.
    \end{itemize}
  \end{proposition}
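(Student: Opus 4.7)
The proof splits into the three bulleted cases; I would treat the third (no critical points) first, since the first two reduce to it locally.

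\textbf{Third bullet.} The plan is to exhibit the product structure explicitly via a rescaled gradient. Set $X = \nabla F / \lVert \nabla F \rVert^{2}$; since $F$ has no critical points, $X$ is a smooth vector field on $\Omega$ with $X(F) \equiv 1$. Its flow $\Phi_{t}$ is defined on all of $\Omega$ for $t \in [0,b-a]$, where $[a,b] = F(\Omega)$, because level sets of $F$ in $[a,b]$ are compact and the flow strictly raises $F$ at unit speed. The map
\[
\Psi \colon V_{0} \times [0,b-a] \longrightarrow \Omega, \qquad (v,t) \longmapsto \Phi_{t}(v),
\]
is then a diffeomorphism with inverse $x \mapsto \bigl(\Phi_{a-F(x)}(x),\, F(x)-a\bigr)$. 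Since $\nabla F$ is tangent to $Y$ by hypothesis, so is $X$, and $\Psi$ restricts to a diffeomorphism $\partial V_{0} \times [0,b-a] \to Y$. Rescaling $t$ gives the left-product structure of cobordisms; applying the same argument to the backwards flow from $V_{1}$ yields the right-product structure.

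\textbf{First bullet.} I would induct on the number of critical points. Choosing regular values $a = c_{0} < c_{1} < \cdots < c_{N} = b$ separating the critical values decomposes $\Omega$ into sub-cobordisms containing at most one critical point each. The critical-point-free strips are products by the third bullet, so it suffices to handle a single boundary stable critical point $z$ of index $k$ between two regular levels. The Morse lemma for boundary critical points supplies coordinates near $z$ in which $F$ has a standard quadratic form; the boundary stable hypothesis $T_{z} W^{u}_{z} \subset T_{z} Y$ upgrades to $W^{u}_{z} \subset Y$, so the handle attachment associated with $z$ is confined to a collar of $Y$. The second bullet follows from the first by replacing $F$ with $-F$: by \cref{rem:flow_swap}, stable and unstable manifolds swap, so boundary unstable critical points become boundary stable, and $V_{0}$ is interchanged with $V_{1}$.

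\textbf{Main obstacle.} The technical heart is the single-critical-point case: one must construct a gradient-like vector field $X'$ on $\Omega$, tangent to $Y$, agreeing with a rescaling of $\nabla F$ away from a small neighborhood of $z$, and whose flow is complete and integrates to a diffeomorphism $V_{0} \times [0,1] \xrightarrow{\,\isom\,} \Omega$ of cobordisms. The subtle step is modifying the flow along the stable manifold $W^{s}_{z}$ near the boundary without introducing interior singularities; this is where $W^{u}_{z} \subset Y$ is crucial, since it confines the degeneration of the descending flow to $Y$ and lets one prescribe the transverse flow to be the usual unit-speed lift. Once this local modification is in hand, patching with the product structures from the regular strips completes the induction. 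This is exactly what is carried out in~\cite{bor:mor:2016} in the cited Lem~2.10, Thm~2.27, and Prop~2.38.
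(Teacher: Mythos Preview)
The paper does not prove this proposition: it is stated with a citation to~\cite{bor:mor:2016} and used as a black box, so there is no ``paper's own proof'' to compare against. Your sketch is a reasonable outline of the standard argument and is consistent with what you would find in the cited source; the third bullet is the classical rescaled-gradient flow argument, and the reduction of the second bullet to the first via reversing $F$ is correct (though strictly speaking you should replace $F$ by $(a+b)-F$ rather than $-F$, so that the new function still maps onto $[a,b]$ with the roles of $V_0$ and $V_1$ swapped). The one place where your write-up is genuinely only a sketch is the single-critical-point step of the first bullet: asserting that $T_z W^u_z \subset T_z Y$ ``upgrades to $W^u_z \subset Y$'' is plausible (both $W^u_z$ and $Y$ are flow-invariant and $z \in Y$), but turning this into an actual diffeomorphism $V_0 \times [0,1] \cong \Omega$ requires the careful construction of the modified gradient-like vector field you allude to, and that construction is the real content of the cited results.
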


  \begin{example} \label{ex:Omega}
    In Fig.~\ref{fig:cob_bdy}, projection of $\Omega$ onto the horizontal axis yields a 
    Morse function $F: \Omega \ra [0,1]$, in which
  $F^{-1}(0) = V_0$, $F^{-1}(1) = V_1$.
    This function has no interior critical points and a single boundary critical
    point. The boundary critical point is boundary stable, and located at the
    vertex of the parabola of $Y$ in $\Omega$. This is an index 1 critical point.
    Proposition~\ref{prop:morse-thy-bdy-cob} implies $\Omega$ is a left-product
    cobordism, as is evident from Figure~\ref{fig:cob_bdy}.

    If we post-compose $F$ with the involution $t \mapsto 1-t$, then we again have a
    Morse function with no interior critical points. This composition has the same boundary
    critical point as before but now it is boundary unstable. The
    index of this critical point is $1$.
  \end{example}

  \subsection{Cerf theory} \label{sec:cerf}
  
  Let $X$ be a smooth, compact $n$-manifold and let $I = [0,1]$ denote the unit interval. 
  A one-parameter family of functions on $X$ is a family of smooth functions $\tilde f_t: X \ra \R$,
  where $t \in I$, and the family varies smoothly with respect to $t$. 
  This is equivalent to specifying a single smooth function $\tilde f: I \times X \ra \R$. In 
  either case, this data gives rise to a map fibered over the interval
  \[
  f: I \times X \ra I \times \R \, , \quad f(t,z) = (t, \tilde f(t,z)) \, ,
  \]
  in the sense that the following diagram commutes
  \begin{equation}
    \begin{tikzcd}
      I \times X 
      \arrow[r, "f"]
      \arrow[rd, "\pi_I"']
      &
      I \times \R
      \arrow[d, "\pi_I"]
      \\
      & 
      I
    \end{tikzcd}
  \end{equation}
  where $\pi_I$ is projection onto the $I$ factor.

  Our primary tool for understanding such families of functions is the
  Cerf diagram. 

  \begin{definition} \label{defn:Cerf-diagram}
	  The {\em Cerf diagram} (or \emph{Kirby diagram}) 
	  of a family of functions $\tilde f:I \times X \to \R$
    is given by
    \[
      \bigcup_{t \in I, \, x \in \Sigma(\tilde f_t)} (t, \tilde f_t(x))
      \subset I \times \R \, .
    \]
    We label each 
    nondegenerate critical value
    of $\tilde{f}_{t}$ with its corresponding index.
  \end{definition}

  The Cerf diagram encodes the critical value
  information of a family of functions as the time parameter
  $t$ varies~\cite{cer:str:1970,hat:pse:1973,kir:calc:1978}.
  A simple Cerf diagram is shown in Figure~\ref{fig:cerf}. Each
  (non-end) point on the curves corresponds
  to a
  nondegenerate critical value of $\tilde{f}_t$
  and the points where two such curves terminate 
  is a cubic
  singularity of $\tilde{f}_{t}$.

  \begin{figure}[h]
    \centering
    \begin{tikzpicture}[xscale=0.5,yscale=0.3]
      \draw (0,-5) -- (0,5);
      \draw (10,-5) -- (10,5);
      \draw plot [smooth,tension=1.2] coordinates {(0,2) (5,4) (10,1.8)};
      \node [below right,scale=0.65] at (0,2) {2};
      \draw plot [smooth,tension=1.3] coordinates {(0,3.5) (4,1) (6.5,-0.5)};
      \node [above right,scale=0.65] at (0,3.5) {2};
      \draw plot [smooth,tension=1.2] coordinates {(2.5,-1) (6,2) (10,4)};
      \node [above,scale=0.65] at (2.6,-0.9) {2};
      \draw plot [smooth,tension=1.3] coordinates {(2.5,-1) (5,-3) (7,-4)};
      \draw plot [smooth] coordinates {(1.5,-3.5) (5,-1)  (6.5,-0.5)};
      \draw plot [smooth] coordinates {(1.5,-3.5) (7,-4)};
      \node [above,scale=0.65] at (1.6,-3.3) {1};
      \node [below right,scale=0.65] at  (1.5,-3.4) {0};
      \node[above,scale=0.65] at (6.8,-4) {1};
      \draw plot [smooth, tension=1.1] coordinates {(5,-0.4) (6.3, 0.8) (7.4,-0.4)};
      \draw plot [smooth, tension=1.1] coordinates {(5,-0.4) (6.3, -1.5) (7.4,-0.4)};
      \node [above,scale=0.65] at (7.4,-0.3) {1};
      \node [below,scale=0.65] at (7.4,-0.5) {0};
    \end{tikzpicture}
    \caption{ A Cerf diagram for a certain generic family of smooth functions. Each 
      nondegenerate critical value
      is labeled by the index of its critical point.
See also \cite[Section A.3, Figures E, F, G.]{Igusa:1987}.
    }
    \label{fig:cerf}
  \end{figure}
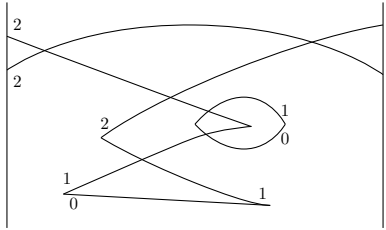

  We will assume that the family 
  $\tilde{f}$ is
  \textit{generic}, meaning that for all but finitely many $t$, the fiber
  $\tilde{f}_t$ has finitely many 
  nondegenerate critical points each of which has a distinct critical value.
  Furthermore, we
  will assume that all remaining fibers have 
  either finitely many 
  nondegenerate critical points
  exactly two of which have a common critical value or a single cubic
  singularity and finitely many
  nondegenerate critical points all of which have distinct critical values.

  \subsection{Wrinkled maps}
  \label{sec:wrinkle}

  We recall the notion of a wrinkle from~\cite{eli:wrinkling:1997}.
  Let
  \[
  w:\R^{q-1} \times \R^{n-1} \times \R^1 \ra \R^{q-1} \times \R^1
  \]
  be given by
  \[
  w(t,x,z) = \left(t, z^3 + 3\left(|t|^2-1\right)z - \sum_{i=1}^j x_i^2 + 
  \sum_{i=j+1}^{n-q} x_i^2 \right) \, ,
  \]
  where $|t|^2 = \sum_{i=1}^{q-1}t_i^2$. The set of critical points of $w$ is 
  \[
  \Sigma(w) = \{ x = 0 \,,\,z^2 + |t|^2 = 1\} \subset \R^{q-1} \times \R^{n-q} \times \R \,\,,
  \]
  and can be identified with a $(q-1)$ sphere $S^{q-1} \subset \R^{q-1} \times \{0\} \times \R$. This sphere has
  equator
  \[
  \{x = 0 \,,\, z =0 \,,\, |t|=1\} \subset \Sigma^1(w) \, ,
  \]
  which we identify with $S^{q-2}$. This equator consists of cusp singularities of index $j+1/2$, its
  upper hemisphere $\Sigma^1(w) \cap \{z > 0 \}$ consists of fold singularities of index $j$,
  and the lower hemisphere $\Sigma^{1}(w) \cap \{ z < 0\}$ consists of fold singularities of index $j+1$.

  \begin{definition}[\cite{eli:wrinkling:1997}]
    \label{defn:wrinkled_map}
    For an open subset $U \subset M$, a map 
    $f: U \ra Q$ is called a {\em wrinkle of index $s+1/2$} if it 
    is equivalent to the restriction of $w$ to an open neighborhood 
    $V \supset D$, where $D$ is the $q$-dimensional disc 
    $\{z^2 + |y|^2 \leq 1, x=0\}$ bounded by $\Sigma^1(w)$.

    A map $f: M \ra Q$ is called {\em wrinkled}
    if there exist disjoint open subsets
    $U_1, U_2, \ldots , U_l \subset M$ such that
    \begin{itemize}
      \item for each $i$, $f|_{U_i}$ is a wrinkle, and 
      \item if $U = \cup_1^l U_i$, then $f|_{M \setminus U}$ is a 
	submersion.
    \end{itemize}
  \end{definition}

  \begin{definition}\label{defn:wrinkled_with_folds}
    A map $f:M \ra Q$ is called {\em wrinkled with folds} 
    if there exist disjoint open subsets
    $U_1, U_2, \ldots , U_l \subset M$ such that
    \begin{itemize}
      \item for each $i$, $f|_{U_i}$ is a wrinkle, and 
      \item if $U = \cup_1^l U_i$, then $f|_{M \setminus U}$ has only
	fold singularities.
    \end{itemize}
  \end{definition}

  The singular locus of a wrinkled map decomposes into a union of 
  wrinkles $S_i = \Sigma^1(f|_{U_i}) \subset U_i$. As before,
  each $S_i$ has a $(q-2)$-dimensional equator of cusps which 
  divides $S_i$ into 2 hemispheres of folds of adjacent indices.
  The singular locus of a wrinkled map with folds decomposes into a union of wrinkles and folds.


\section{Persistence modules for 1-parameter families of functions}
\label{sec:1-param}
In this section we define multiparameter persistence modules for $1$-parameter families of functions.
The unit interval $[0,1]$ is denoted by $I$.

\subsection{Indexing categories}
\label{sec:index-cat}

Let $\IntI$ denote the category whose objects are closed intervals $[a,b]
\subset I$, and whose morphisms $[a,b] \ra [c,d]$ are inclusions
$[a,b] \subset [c,d]$.  Let $\Delta^2 = \{(a,b) \, | \, 0 \leq a \leq b \leq
1\}$.  The category $\IntI$ is isomorphic to the category $\uD$, whose objects
are points $(a,b) \in \Delta^2$ and has a unique morphism $(a,b) \ra (c,d)$ if
and only if $c \leq a \leq b \leq d$.
Finally, let $\uR$ denote the category corresponding to the poset of real numbers
$(\R,\leq)$.  Then we have the isomorphic product categories $\IntI \times \uR$
and $\uD \times \uR$.

Note that there may not exist a map between two objects of $\uD \times \uR$,
in contrast to the (ordinary) sublevel-set persistence of
  Morse functions.
There does exist, however, a zig-zag of maps between any two objects due to
the fact that $\IntI \cong \uD$ is a join-semilattice.
In particular, $[a,b] \subset [\min(a,a'),\max(b,b')] \supset [a',b']$;
for example, see the two arrows in the third triangular slice in Figure~\ref{fig:pm-hat}.

For $n \geq 1$, let $(\R^n,\leq)$ be the set $\R^n$ together with the product
partial order. That is $(x_1,\ldots,x_n) \leq (y_1,\ldots,y_n)$ if and
only if $x_k \leq y_k$ for all $1 \leq k \leq n$.  Then the poset $\Delta^2$ includes in the
poset $(\R^2,\leq)$ under the mapping $(a,b) \mapsto (-a,b)$.  It follows that
the product poset $\Delta^2 \times \R$ includes in the poset $(\R^3,\leq)$ under
the mapping $(a,b,c) \mapsto (-a,b,c)$.  Thus we have an inclusion of
categories $\uD \times \uR \incl \uR^3$ where $\uR^3$ denotes the category
corresponding to the poset $(\R^3,\leq)$.
For a poset $P$ and $p \in P$, let $U_p = \{q \in P \ | \ p \leq q\}$, called the \emph{up-set} of $p$.
Then our persistence modules
may also be considered to be $\R^3$-graded modules over the monoid ring
$K[U_0]$, where $U_0$ is the up-set of $0 \in \R^3$
(see~\cite{Bubenik:2021,Miller:2017}).

\subsection{Diagrams of spaces} \label{sec:diagrams}

Let $\Top$ denote the category of topological spaces and continuous maps.  Let
$X$ be a topological space and let $\tilde{f}:I \times X \to \R$ be a
(not necessarily continuous in either variable) real-valued function on $I \times X$, which corresponds to a
one-parameter family of real-valued functions on $X$, 
given by $\tilde{f}_t(x) = \tilde{f}(t,x)$.
Let $f:I \times X \to I
\times \R$ be the function given by $f(t,z)=(t,\tilde{f}(t,z))$.  Then we have a
\emph{diagram of spaces} of $X$ given by $F:\IntI \times \uR \to \Top$ or
equivalently $F:\uD \times \uR \to \Top$ given by $F([a,b],c) =
f^{-1}([a,b]\times(-\infty,c])$ or $F(a,b,c) =
f^{-1}([a,b]\times(-\infty,c])$, and morphisms given by inclusions of the
corresponding inverse images. For any subcategory $\mathcal C$ of $\IntI \times \uR$,
we can restrict a diagrams of space $F$ to $\mathcal{C}$, forming a \emph{sub-diagram
of spaces indexed on $\mathcal{C}$}; we omit $\mathcal{C}$ if it is clear from context. 
If the subcategory is finite, we say the diagram of spaces if \emph{finite}.


\begin{remark}
  The target category of a diagram of spaces of $X$ can be restricted to $\cat{Sub}(I \times X)$, the category
  whose objects are subspaces of $I \times X$ and whose morphisms are given by inclusion.
\end{remark}

\subsection{Multiparameter persistence modules} \label{sec:mppm}


Let $\vect$ denote the category of vector spaces over a field $k$ and $k$-linear maps.

Given a one-parameter family $\tilde{f}$ of real-valued functions on a topological space $X$ as in \cref{sec:diagrams}, we have the corresponding diagram of topological spaces $F$.
For $j \geq 0$, let $H_j = H_j(-;k)$ denote the singular homology functor in degree $j$ with coefficients in the field $k$.
The \emph{multiparameter persistence module} corresponding to $\tilde{f}$ is
given by the functor $H_j F: \IntI \times \uR \to \vect$ or equivalently $H_j F:
\uD \times \uR \to \vect$.


\subsection{Betti and Euler characteristic functions} \label{sec:euler}

For applied mathematicians, it is sometimes preferable to ignore persistence entirely (i.e. the morphisms in the persistence module) and only compute the pointwise Betti numbers, or cruder still, the pointwise Euler characteristic. While much of the mathematical structure is lost, being able to complete computations on vastly larger data sets may be more important.
  In this section we show how these coarser invariants fit within our framework.

Whenever they are well defined, we have the following.  For $j \geq 0$, 
the \emph{$j$-th Betti function} 
$\beta_j: \Delta^2 \times \R \to \Z$ is given by 
\[
  \beta_j(a,b,c) = \rank(H_j F(a,b,c)).
\]
The \emph{Euler characteristic function} $\chi: \Delta^2 \times \R \to \Z$ is given by
\[
  \chi(a,b,c) = \sum_j (-1)^j \beta_j(a,b,c).
\]
%
In cases where $F$ is given by a cellular complex, the Euler characteristic
equals the alternating sum of the number of cells of a given dimension.

\subsection{Stability}
\label{sec:stability}

We prove that our multiparameter persistence modules are stable with respect to perturbations of the underlying one-parameter family of functions.

Let $X$ be a topological space and consider two one-parameter families of (not necessarily continuous) functions, $\tilde{f},\tilde{g}: I \times X \to \R$.
Let $F,G: \cat{\Delta^2} \times \cat{R} \to \cat{Top}$ be the corresponding diagrams of spaces defined in Section~\ref{sec:diagrams} and for $j \geq 0$,
let $H_j F,H_j G: \cat{\Delta^2} \times \cat{R} \to \cat{Vect_K}$ be the corresponding multiparameter persistence modules defined in Section~\ref{sec:mppm}.
Let $d_{\infty}(\tilde{f},\tilde{g}) = \sup_{(t,x) \in I \times X} \lvert \tilde{f}_t(x) - \tilde{g}_t(x) \rvert$.

We define a \emph{superlinear family of translations}~\cite[Section 3.5]{bdss:1} on $\cat{\Delta^2} \times \cat{R}$ given by $\Omega_{\varepsilon}(a,b,c) = (a,b,c+\varepsilon)$ for $\varepsilon \geq 0$.
The corresponding \emph{interleaving distance}~\cite[Definition 3.20]{bdss:1}, $d_I$, is given by the infimum of all $\varepsilon$ for which two diagrams or persistence modules indexed by $\cat{\Delta^2} \times \cat{R}$ are $\Omega_{\varepsilon}$-interleaved~\cite[Definitions 3.4 and 3.5]{bdss:1}.

\begin{theorem} \label{thm:stability}
  $d_I(H_j F,H_j G) \leq d_{\infty}(\tilde{f},\tilde{g})$.
\end{theorem}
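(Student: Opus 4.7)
The plan is to run the standard sublevel-set stability argument, adapted to the three-parameter indexing category $\uD \times \uR$, at the level of diagrams of spaces and then push it through singular homology.

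Set $\varepsilon = d_\infty(\tilde f, \tilde g)$ and suppose first that $\varepsilon < \infty$ (otherwise the bound is vacuous). The key pointwise observation is that for every $(t,x) \in I \times X$ we have $\tilde f_t(x) \leq \tilde g_t(x) + \varepsilon$ and $\tilde g_t(x) \leq \tilde f_t(x) + \varepsilon$. Consequently, if $(t,x)$ lies in $f^{-1}([a,b] \times (-\infty,c])$, i.e.\ $t \in [a,b]$ and $\tilde f_t(x) \leq c$, then $\tilde g_t(x) \leq c+\varepsilon$, so $(t,x) \in g^{-1}([a,b] \times (-\infty,c+\varepsilon])$. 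This gives an inclusion of subspaces of $I \times X$,
\[
  F(a,b,c) \;\subset\; G(a,b,c+\varepsilon) \;=\; (G \circ \Omega_\varepsilon)(a,b,c),
\]
and symmetrically $G(a,b,c) \subset F(a,b,c+\varepsilon)$.

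Next I would check that these inclusions are natural in $(a,b,c)$. If $(a,b,c) \leq (a',b',c')$ in $\uD \times \uR$, both $F(a,b,c) \hookrightarrow F(a',b',c')$ and the maps into $G \circ \Omega_\varepsilon$ are honest inclusions of subspaces of $I \times X$, so the required square of inclusions commutes on the nose. Thus the two families assemble into natural transformations $\varphi: F \Rightarrow G \circ \Omega_\varepsilon$ and $\psi: G \Rightarrow F \circ \Omega_\varepsilon$ in the diagram category $\Top^{\uD \times \uR}$. Their composites $\psi_{\Omega_\varepsilon} \circ \varphi$ and $\varphi_{\Omega_\varepsilon} \circ \psi$ are, at each $(a,b,c)$, the inclusions $F(a,b,c) \hookrightarrow F(a,b,c+2\varepsilon)$ and $G(a,b,c) \hookrightarrow G(a,b,c+2\varepsilon)$, i.e.\ precisely the structure maps of the diagrams corresponding to the morphism $(a,b,c) \leq \Omega_{2\varepsilon}(a,b,c)$. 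Hence $(\varphi,\psi)$ is an $\Omega_\varepsilon$-interleaving of diagrams of spaces in the sense of~\cite{bdss:1}.

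Applying the singular homology functor $H_j(-;k)$ is functorial on $\Top$, so it sends this to an $\Omega_\varepsilon$-interleaving of the persistence modules $H_j F$ and $H_j G$ indexed by $\uD \times \uR$. By definition of $d_I$ we conclude $d_I(H_j F, H_j G) \leq \varepsilon = d_\infty(\tilde f, \tilde g)$, which is the stated inequality. The only place requiring any care is confirming that the superlinear family $\Omega_\varepsilon(a,b,c) = (a,b,c+\varepsilon)$ used here satisfies the hypotheses of the interleaving framework of~\cite{bdss:1} (it is additive in $\varepsilon$, pointwise $\geq \mathrm{id}$, and acts only on the $c$-coordinate, which is the one being shifted), but this is immediate from the definitions; no subtlety arises from the $[a,b]$ coordinates because the interval component is left untouched by $\Omega_\varepsilon$ and matches strictly on both sides.
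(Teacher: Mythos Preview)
Your proof is correct and follows essentially the same approach as the paper's: the paper sets $\varepsilon = d_\infty(\tilde f,\tilde g)$, asserts that $F$ and $G$ are $\Omega_\varepsilon$-interleaved ``from the definitions'', and then invokes \cite[Theorem~3.23]{bdss:1} to push the interleaving through $H_j$. You have simply unpacked the ``from the definitions'' step (the sublevel-set inclusions and their naturality) and replaced the citation by the direct observation that $H_j$ is a functor and hence preserves interleavings; these are the same argument at different levels of detail.
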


\begin{proof}
  Let $\varepsilon = \sup_{(t,x) \in I \times X} \lvert \tilde{f}_t(x) - \tilde{g}_t(x) \rvert$.
  It follows from the definitions that $F$ and $G$ are $\Omega_{\varepsilon}$-interleaved.
  By \cite[Theorem 3.23]{bdss:1}, $H_j F$ and $H_j G$ are also $\Omega_{\varepsilon}$-interleaved.
\end{proof}



\section{Examples I: basic examples}
\label{sec:examples}
In this Section, we illustrate our results with several examples.  Recall that
a one-parameter family of functions on $X$ is
a function $\tilde f: I \times X \ra \R$
consisting
of functions
$\tilde f_t:X \to \R$, indexed by $t \in I$.
This
gives rise to a
map fibered over the interval I,
\begin{equation}
  f: I \times X \to I \times \R \, , \quad f(t,z) = (t, \tilde f(t,z)) \, .
  \label{eqn:family_of_func}
\end{equation}
%
We will
  replace smooth functions by piecewise linear approximations to make the associated multiparameter persistence module easier to describe -- for an example, see Figure~\ref{fig:pmod_func}.
  This replacement does not affect the qualitative structure of the module but does change the support of its indecomposable summands.

\subsection{Persistence modules of graphs of functions}
\label{sec:pmod_func}

We begin by considering a one point space $X = \{\ast\}$. A
one-parameter family of functions $\tilde f_t:X \to \R$ is equivalent to a function $g: I \ra \R$,
where $g(t) = \tilde f_t(\ast)$.
Hence, the image of the corresponding fibered function $f: I \times X \to I \times
\R$ is just the graph of $g$.
Furthermore, since $\ast$ is a critical point of
$\tilde f_t$ for all $t$, the Cerf diagram of $f$ coincides with the graph of $g$.

For example, let $g(t) = 4t(1-t)$, plotted in Figure~\ref{fig:pmod_func}.
For convenience we will instead consider the piecewise linear function
$\tilde{g}(t) = 2 \min(t,1-t)$. This function is no longer smooth in $t$, but
its simplicity will make it easier to give a complete analysis
(see the comment following Eq.~\eqref{eqn:family_of_func}).

We have a diagram of topological spaces $F: \uD \times \uR \to \Top$ given by
$F(a,b,c) = f^{-1}([a,b] \times (-\infty,c])$, where $f:I \times X \to I \times
\R$ is given by $(t,*) \mapsto (t,\tilde{g}(t))$.
The space $F(a,b,c)$ is empty if $c < 2\min(a,1-b)$.
That is, $\frac{c}{2} < a \leq b < 1 - \frac{c}{2}$.
The space $F(a,b,c)$ is contractible if $c \geq 1$ or
if $2\min(a,1-b) \leq c < 2\max(a,1-b)$.
Equivalently, $a \leq \frac{c}{2}$ and $b < 1 - \frac{c}{2}$, or
$\frac{c}{2} < a$ and $1-\frac{c}{2} \leq b$.
In the remaining case, $2\max(a,1-b) \leq c < 1$, we find $F(a,b,c) \simeq S^0$, two disjoint points.
That is, $0 \leq c < 1$,  $0 \leq a \leq \frac{c}{2}$ and $1-\frac{c}{2} \leq b \leq 1$.

The persistence module $H_0F: \uD \times \uR \to \vect$ satisfies
\begin{equation}
  \label{eqn:pmod_func_cases}
  \dim H_0F =
  \begin{cases}
    1 & \text{if } c \geq 1 \\
    2 & \text{if } 2\max(a,1-b) \leq c < 1 \\
    1 & \text{if } 2\min(a,1-b) \leq c < 2\max(a,1-b) \\
    0 & \text{if } c < 2\min(a,1-b) \, ,
  \end{cases}
\end{equation}
while the persistence modules $H_j F$ are the trivial $K$-vector space for all $j > 0$.
See Figure~\ref{fig:pm-hat} for a visualization of $\beta_0$.

\begin{figure}[h]
  \centering
  \begin{minipage}{0.2\linewidth}
        \begin{tikzpicture}[scale=2]
      \draw[domain=0:1,smooth,thick] plot (\x,{-(2*\x-1)^2+1});
      \draw (0,0) -- (1,0);
      \draw[->] (0,0) -- (0,1.3);
    \end{tikzpicture}
  \end{minipage}
  \begin{minipage}{0.2\linewidth}
    \begin{tikzpicture}[scale=2]
      \draw[domain=0:0.5,smooth,thick] plot (\x,{2*\x});
      \draw[domain=0.5:1,smooth,thick] plot (\x,{-(2*\x)+2});
      \draw (0,0) -- (1,0);
      \draw[->] (0,0) -- (0,1.3);
    \end{tikzpicture}
    \end{minipage}
    \begin{minipage}{0.35\linewidth}
    \begin{tikzcd}[cramped, column sep=-5pt]
       & F(0,1,1) &  \\
       & F(0,1,\tfrac{1}{2}) \ar[u] &  \\
      F(0,\tfrac{1}{2},\tfrac{1}{2}) \ar[ur] & & F(\tfrac{1}{2},1,\tfrac{1}{2}) \ar[ul] \\
                                             & F(\tfrac{1}{2},\tfrac{1}{2},0) \ar[ul] \ar[ur] &
    \end{tikzcd}
  \end{minipage}
  \begin{minipage}{0.2\linewidth}
    \begin{tikzcd}[column sep=tiny]
       & k &  \\
       & k^2 \arrow{u}{\tiny
         \begin{pampmatrix}
           1 & 1 
       \end{pampmatrix}} &  \\
       k \arrow{ur}{\tiny
         \begin{pampmatrix}
           1 \\ 0
         \end{pampmatrix}} & & k \arrow{ul}[swap]{\tiny
         \begin{pampmatrix}
           0 \\ 1
       \end{pampmatrix}} \\
       & 0 \arrow{ul} \ar[ur] &
    \end{tikzcd}
\end{minipage}
  \caption{Left: the graph of $g(t) = 4t(1-t)$ for $t \in [0,1]$.
      Middle left: the graph of $\tilde{g}(t) = 2\min(t,1-t)$ for $t \in [0,1]$.
      This is also the image of the map $f:I \times \{*\} \to I \times \R$  given by $f(t,*) = (t,\tilde{g}(t))$.
    Middle right: a subdiagram of the diagram of spaces $F: \cat{\Delta^2 \times R} \to \cat{\Top}$ given by $F(a,b,c) = f^{-1}([a,b] \times (-\infty,c])$. 
    Right: the corresponding subdiagram of the persistence module $H_0 F: \cat{\Delta^2 \times R} \to \cat{Vect}$.} %
  \label{fig:pmod_func}
\end{figure}
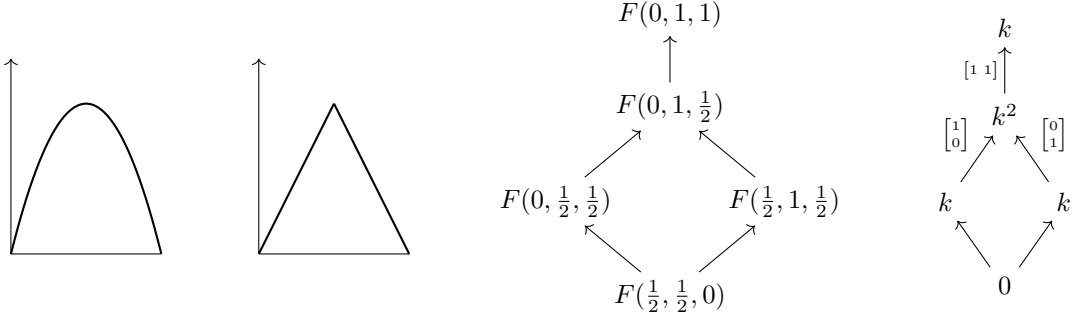

\begin{figure}
  \centering
  \tdplotsetmaincoords{70}{120}
  \begin{tikzpicture}[tdplot_main_coords]
    \draw[->] (0,0,0) -- (-2,0,0) node[anchor=north west, left] {$a$};
    \draw[->] (0,0,0) -- (0,6,0) node[anchor=north west, below right]{$c$};
    \draw[->] (0,0,0) -- (0,0,3) node[anchor=south]{$b$};
    \newcommand\dent{.33}
    \draw[black] (.2,0,0) -- (-.2,0,0);
    \node[below left] at (0,0,0) {$0$};
    \draw(.2,3,0) -- (-.2,3,0);
    \node[below left] at (0,3,0) {$1$};
    \draw[color =dblue,dashed] (-2,2,2) -- (-2,4,2);
    \draw[color =dblue,dashed] (0,2,2)--(0,4,2);
    \draw[color =dblue,dashed] (0,2,0)--(0,4,0);
  \draw[color=dblue,opacity=.5] (0,0,0) -- (-1,3,3*\dent);
  \draw[color=dblue,opacity=.5] (-2,0,2) -- (-1,3,3*\dent);
  \draw[color=dblue, opacity=.5] (0,0,2) -- (-1,3,3*\dent);
  \draw[color=dblue,densely dotted] (0,3,2) -- (0,3,3*\dent) -- (-1,3,3*\dent) -- (-1,3,2);
    \fill [cblue,fill opacity = .3](0,2,0) -- (0,3,0) -- (0,3,2) -- (0,2,2);
    \fill [cblue,fill opacity = .3] (-2,2,2) -- (-2,3,2) -- (0,3,2) -- (0,2,2);
    \shade [left color=cblue, right color = white,opacity=.3] (0,3,0) -- (0,3,2) -- (0,4,2) -- (0,4,0);
  \shade [left color=cblue, right color = white,opacity=.3,shading angle=50] (0,3,2) -- (0,4,2) -- (-2,4,2) -- (-2,3,2); 
\draw[color=gray!50] (0,0,0) -- (0,0,2) -- (-2,0,2) -- (0,0,0);
\draw[color=gray!50] (0,1,0) -- (0,1,2) -- (-2,1,2) -- (0,1,0);
\draw[color=gray!50] (0,2,0) -- (0,2,2) -- (-2,2,2) -- (0,2,0);
\draw[color=gray!50] (0,3,0) -- (0,3,2) -- (-2,3,2) -- (0,3,0);
\draw[fill=cblue,fill opacity=.5,draw=dblue] (0,1,2) -- (0,1,0) -- (-\dent,1,\dent) -- (-\dent,1,2) -- (0,1,2); 
\draw[fill=cblue, fill opacity=0.5, draw=dblue] (0,2,2) -- (-2*\dent,2,2) -- (-2*\dent,2,2*\dent) -- (0,2,0) -- (0,2,2); 
\draw[fill=cblue,fill opacity=0.5,draw=dblue] (0,2,2) -- (0,2,2-2*\dent)-- (-2+2*\dent,2,2-2*\dent) -- (-2,2,2) -- (0,2,2); 
\draw[fill = cblue,fill opacity=0.5,draw=dblue] (0,1,2) -- (0,1,2-\dent) -- (-2+\dent,1,2-\dent) -- (-2,1,2) -- (0,1,2); 
\draw[fill=cblue, fill opacity = .5,draw=dblue] (0,0,2) -- (-2,0,2) -- (-2,3,2) -- (0,3,2) -- (0,0,2); 
\draw[fill=cblue,fill opacity=.5,draw=dblue] (0,0,2) -- (0,0,0) -- (0,2,0) -- (0,2,2) -- (0,0,2); 
\draw[fill=cblue,fill opacity =.5,draw=dblue] (0,0,2) -- (0,3,3*\dent) -- (0,3,2) --(0,0,2); 
\draw[fill=cblue, fill opacity=.5, draw = dblue] (0,0,2) -- (-3*\dent,3,2) -- (0,3,2) -- (0,0,2); 
\draw[color =dblue] (0,0,2)--(0,2,2);
\draw[color=dblue](0,0,2) -- (-2*\dent,2,2);
\draw[color=dblue] (0,0,0) -- (0,0,2);
\draw[fill=cblue,fill opacity = .5,draw=dblue] (0,3,2) -- (0,3,0) -- (-2,3,2) -- (0,3,2);
\node[scale=0.5] at (-0.3,2,0.7) {$k$};
\node[scale=0.5] at (-1.5,2,1.8) {$k$};
\node[scale=0.5] at (-0.3,2,1.7) {$k^2$};
\node[scale=0.5] at (-0.3,3,1.7) {$k$};
\draw[->] (-0.3,2,0.9) -- (-0.3,2,1.5);
\draw[->] (-1.3,2,1.75) -- (-0.6,2,1.7);
\draw[->] (-0.3,2.3,1.7) -- (-0.3,2.7,1.7);
  \end{tikzpicture}
  \caption{The multiparameter persistence module $H_* F: \cat{\Delta^2 \times R} \to \cat{Vect}$ defined by $H_*F(a,b,c) = H_*(f^{-1}([a,b] \times (-\infty,c])$ where $f:I \times \{*\} \to I \times \R$ is given by $f(t,*) = (t,\tilde{g}(t))$ from \cref{fig:pmod_func}.
      For $j \leq 0$, $H_j(F) = 0$.
      We have $\beta_0 = 2$ in the open square pyramid
      given by $0 \leq c < 1$, $0 \leq a \leq \frac{c}{2}$ and $1 - \frac{c}{2} \leq b \leq 1$.
      Furthermore, $\beta_0 = 1$ in the semi-infinite triangular cylinder given by $0 \leq a \leq b \leq 1$ and $c \geq 1$.
      For $0 \leq c < 1$, we also have $\beta_0 = 1$ in the region given by $0 \leq a \leq \frac{c}{2}$ and $a \leq b < 1-\frac{c}{2}$ and the region $\frac{c}{2} < a \leq b$ and $1-\frac{c}{2} \leq b \leq 1$.
      Everywhere else, $\beta_0 = 0$.
      That is, for $c < 0$ and $0 \leq a \leq b \leq 1$ and for $0 \leq c < 1$ and $\frac{c}{2} < a \leq b < 1-\frac{c}{2}$.
      The right hand diagram in Figure~\ref{fig:pmod_func} 
      is embedded in $H_0F$ as indicated. It follows that $H_0F$ is indecomposable.}
  \label{fig:pm-hat}
\end{figure}

The diagram of spaces $F$ has the sub-diagram given in Figure~\ref{fig:pmod_func},
which has a corresponding indecomposable persistence module, also in
\cref{fig:pmod_func}.
This submodule of $H_0 F$ is also visualized in \cref{fig:pm-hat}.
It follows that $H_0 F$ is an indecomposable persistence module.

\subsection{Indecomposable persistence modules with arbitrary maximum dimension}
\label{sec:arb-dim}

The example in the previous section can be generalized to produce an indecomposable persistence module
arising from a one-parameter family of functions, with arbitrarily large maximum dimension.

For $n>0$, let $\tilde{g}_n:I \to \R$ be the piecewise linear function obtained by linear interpolation between the values
$\tilde{g}_n(\frac{i}{n}) = 0$ for $0 \leq i \leq n$ and
$\tilde{g}_n(\frac{2i-1}{2n}) = 1$ for $1 \leq i \leq n$ (the example of Section~\ref{sec:pmod_func} is the case $n=1$).
Then we have the corresponding diagram of topological spaces
$F: \uD \times \uR \to \Top$ given by $F(a,b,c) = f^{-1}([a,b] \times
(-\infty,c])$, where $f:I \times \{*\} \to I \times \R$ is given by $(t,*) \mapsto
(t,\tilde{g}(t))$.
Now $F$ has a finite subdiagram $\hat{F}$ given by $F(\frac{i}{n},\frac{j}{n},\frac{1}{2})$, where $0 \leq i \leq j \leq n$.

Applying $H_0$ we obtain the persistence module $H_0F$, which contains the following 
persistence module $H_0\hat{F}$:
\begin{equation*}
  \begin{tikzcd}[row sep = tiny,column sep = tiny]
      &&&& k^{n+1} &&&&& \\
      &&& \iddots \ar[ur] & \cdots & \ddots \ar[ul] &&&& \\
      && k^3 \ar[ur] && k^3 \ar[ur]\ar[ul] & \cdots & k^3 \ar[ul] &&& \\
      & k^2 \ar[ur] && k^2 \ar[ur] \ar[ul] && k^2 \ar[ul] & \cdots & k^2 \ar[ul] && \\
    k \ar [ur] && k\ar[ur] \ar[ul] &&k \ar[ur] \ar[ul] && \cdots && k \ar[ul] &
  \end{tikzcd}
\end{equation*}
Each linear map $k^m \to k^{m+1}$ pointing up and to the right is given by the inclusion
$k^m \to k^m \oplus k \isom k^{m+1}$.
and each linear map $k^m \to k^{m+1}$ pointing up and to the left left is given by the inclusion
$k^m \to k \oplus k^{m} \isom k^{m+1}$.
This persistence module is decomposable into $(n+1)$ one-dimensional summands, whose support is given by the up-set of one of the $(n+1)$ minimal elements.

Now append the terminal element $F(0,1,1)$ to the diagram $\hat{F}$ to obtain the diagram $\check{F}$, which is also a subdiagram of $F$.
Then we have the persistence module $H_0\check{F}$,
\begin{equation*}
  \begin{tikzcd}[row sep = tiny,column sep = tiny]
      &&&& k &&&&& \\
      &&&& k^{n+1} \ar[u] &&&&& \\
      &&& \iddots \ar[ur] & \cdots & \ddots \ar[ul] &&&& \\
      && k^3 \ar[ur] && k^3 \ar[ur]\ar[ul] & \cdots & k^3 \ar[ul] &&& \\
      & k^2 \ar[ur] && k^2 \ar[ur] \ar[ul] && k^2 \ar[ul] & \cdots & k^2 \ar[ul] && \\
    k \ar [ur] && k\ar[ur] \ar[ul] &&k \ar[ur] \ar[ul] && \cdots && k \ar[ul] &
  \end{tikzcd}
\end{equation*}
where the linear map $k^{n+1} \to k$ is given by summing the coordinates. This persistence module is indecomposable
since the upset of every minimal element contains the terminal element $H_0F(0,1,1) \isom k$.

\subsection{A class of indecomposable persistence modules}
\label{sec:indecomposable}

Let $g:I \to \R$ be any (not necessarily continuous) bounded real-valued function on the unit interval. Let $f: I \times \{\ast\} \to I \times \R$ be given by $f(t,*) = (t,g(t))$ and let $F: \uD \times \uR \to \Top$ be given by $F(a,b,c) = f^{-1}([a,b] \times (-\infty,c])$.

\begin{theorem}
	\label{thm:one_param_indec}
  Let $f_t(*) = g(t)$ be any uniformly bounded one-parameter family of functions on a one point space $\{\ast\}$. Then the corresponding persistence module $H_{j}F$ is indecomposable for every $j \geq 0$.
\end{theorem}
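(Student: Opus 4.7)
The plan is to show that $\mathrm{End}(H_0 F) \isom k$, which (since $k$ has no nontrivial idempotents) forces $H_0 F$ to be indecomposable; for $j \geq 1$ the module is identically zero and so is trivially indecomposable.

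To dispose of $j \geq 1$: each space $F(a,b,c)$ is a subspace of $I \times \{\ast\} \isom I \subset \R$. Every continuous path in a subset of $\R$ has convex image by the intermediate value theorem, so each path-component of $F(a,b,c)$ is an interval, hence contractible. The singular chain complex decomposes as a direct sum over path-components, so $H_j F(a,b,c) = 0$ for all $(a,b,c)$ and all $j \geq 1$.

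For $j = 0$, the plan exploits two distinguished families of objects in $\uD \times \uR$. Since $g$ is bounded, I choose $c^\star \geq \sup g$; the ``top'' object $(0,1,c^\star)$ satisfies $F(0,1,c^\star) = [0,1]$, so $H_0 F(0,1,c^\star) \isom k$. Given any endomorphism $e \colon H_0 F \to H_0 F$, its component at this object is multiplication by some scalar $\lambda \in k$. The other family consists of ``point'' objects $(t_0, t_0, c)$ where $t_0 \in I$ and $c \geq g(t_0)$: for these, $F(t_0,t_0,c) = \{t_0\}$, so $H_0 F(t_0, t_0, c) \isom k$, and $e(t_0, t_0, c)$ is multiplication by some scalar $\mu(t_0)$. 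Applying naturality to the morphism $(t_0, t_0, c) \to (0, 1, c^\star)$, whose induced map on $H_0$ is the identity $k \to k$ (both generators represent the unique path-component of the relevant space), I conclude $\mu(t_0) = \lambda$ for every such $t_0$.

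The final step: given any object $(a,b,c)$ and any connected component $C$ of $F(a,b,c)$, pick $t_0 \in C$. Since $C \subset F(a,b,c)$, we have $g(t_0) \leq c$, so the point object $(t_0, t_0, c)$ is defined. The morphism $(t_0, t_0, c) \to (a,b,c)$ in $\uD \times \uR$ (which exists because $a \leq t_0 \leq b$) induces $k \to H_0 F(a,b,c)$ sending the generator to $[C]$, and naturality of $e$ yields $e(a,b,c)[C] = \lambda \cdot [C]$. As the components generate $H_0 F(a,b,c)$, this forces $e(a,b,c) = \lambda \cdot \mathrm{id}$ uniformly, so $\mathrm{End}(H_0 F) \isom k$. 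The main obstacle is recognizing that single-point stalks combined with the ``top'' object $(0,1,c^\star)$ provide enough constraints to rigidify an arbitrary endomorphism to a single scalar; after that observation, the argument reduces to two routine naturality squares.
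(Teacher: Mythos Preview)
Your proof is correct and takes a different route from the paper. The paper argues by contradiction: assuming a nontrivial splitting $H_0 F \cong M \oplus N$, it observes that at the top object $T=(0,1,B)$ one of the summand inclusions must vanish, then uses the surjection $\bigoplus_{t\in I} k[U_{(t,t,g(t))}] \twoheadrightarrow H_0 F$ to locate a point object $a$ at which the complementary projection is injective, and derives the contradiction that the structure map $(H_0F)_a \to (H_0F)_T$ is zero. You instead compute the endomorphism ring directly and show it is $k$, which is a strictly stronger conclusion (the module is a brick, not merely indecomposable). Both arguments rest on the same two structural facts---one-dimensionality at the top object and at each point object, together with the fact that the point objects generate---so the difference is packaging rather than geometry; your version yields more information and is arguably cleaner. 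One small repair: the morphism $(t_0,t_0,c) \to (0,1,c^\star)$ you invoke exists only when $c \leq c^\star$, so either route the naturality through $(t_0,t_0,g(t_0))$ instead (both required morphisms then exist, since $g(t_0)\le c$ and $g(t_0)\le c^\star$), or first note via $(0,1,c^\star)\to(0,1,c')$ that $e$ acts by the same $\lambda$ at every $(0,1,c')$ with $c' \ge c^\star$.
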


\begin{proof}
  For all $(a,b,c)) \in \Delta^2 \times \R$, $F(a,b,c)$ deformation retracts to a subset of $I$, so
  $H_k(F) =0$ for $k \geq 1$. 
  Recall (\cref{sec:index-cat}) that for $p \in \Delta^2 \times \R$, $U_p$ denotes the up-set of $p$.
  
    Assume that $H_0F \isom M \oplus N$ is a nontrivial decomposition of $H_0F$.
    Then there are nonzero maps $p:H_0 F \to M$, $q:H_0 F \to N$, $i:M \to H_0 F$, and $j:N \to H_0 F$ such that $ip + jq = 1_{H_0 F}$.
  Choose $B$ $\in \R$ such that $g(t) \leq B$ for all $t \in I$.
  Let $T = (0,1,B)$.
  Then $(H_0 F)_T = k$. It follows that either $i_T$ or $j_T$ is the zero map. Assume without loss of generality that $i_T = 0$.


  By definition, we have that for all $t \in I$,
  \begin{equation*}
    F(t,t,c) =
    \begin{cases}
      (t,c) & \text{if } c \geq g(t) \\
      \emptyset & \text{if } c < g(t).
    \end{cases}
  \end{equation*}
  So, in particular $H_0F(t,t,g(t)) = k$.
  Furthermore, we have a surjection of persistence modules
  \begin{equation*}
    \bigoplus_{t \in I} k[U_{(t,t,g(t))}] \xto{\varphi} H_0F.
  \end{equation*}
  Since $\varphi$ is surjective and $p$ is nonzero, it follows that $p \circ \varphi$ is nonzero.
    Therefore there exists an $a = (t_0,t_0,g(t_0))$ such that $k[U_{a}] \xto{p\varphi} M$ is nonzero, 
    which forces $(p\varphi)_{a}: k[U_{a}]_{a} \to M_{a}$ to also be nonzero.
  Since $k[U_a]_a \isom k$ and $(H_0 F)_a \isom k$, it follows that $p_a: (H_0F)_a \to M_a$ is injective.
    Therefore, $q_a = 0$.

  Since $ip + jq = 1_{H_0 F}$, $(H_0 F)_{a \leq T} = i_T M_{a \leq T} p_a + j_T N_{a \leq T} q_a  = 0$, which is a contradiction.
 %
%
\end{proof}

\subsection{The cylinder}
\label{sec:cyl}

Increasing the dimension of the manifold in our examples, consider $X = S^1$.
Let $\tilde{f}_t: S^1 \to \R$ be the constant family of height functions on the circle;
$\tilde{f}_t(\theta) = \sin \theta$.
The corresponding function fibered over the interval
$f:I \times S^1 \to I \times \R$
has domain the cylinder, 
and is given by $f(t,\theta) = (t,\sin \theta)$.
See Figure~\ref{fig:std_cyl}.
The corresponding Cerf diagram is shown in Figure~\ref{fig:std_cyl_cerf}. It consists of two horizontal lines, corresponding to the fold singularities
 of $\tilde{f}$
given by the global minimum and global maximum of the height function.



By definition, $F(a,b,c) = f^{-1}([a,b] \times (-\infty,c]) = [a,b] \times \{ \theta \ | \ \sin \theta \leq c\}$.
Therefore $F(a,b,c)$ is empty if $c < 0$, $F(a,b,c)$ is contractible if $0 \leq c < 1$, and $F(a,b,c)$ is homotopy equivalent to $S^1$ if $c \geq 1$.
Thus we find
\begin{equation*}
  \dim H_0F =
  \begin{cases}
    1 &\text{if } c \geq 0 \\
    0 &\text{if } c < 0 \, ,
  \end{cases}
  \quad \text{and } \quad
  \dim H_1F =
  \begin{cases}
    1 &\text{if } c \geq 1 \\
    0 &\text{if } c < 1.
  \end{cases}
\end{equation*}

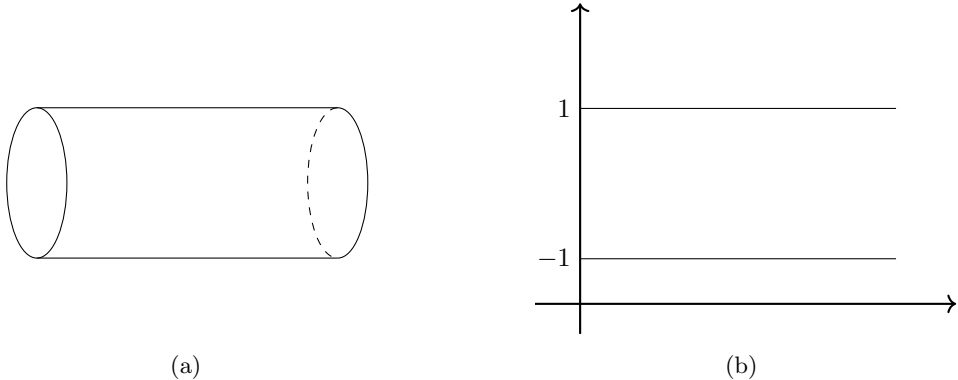
\begin{figure}[h]
  \centering
  \begin{subfigure}[t]{0.45\textwidth}
    \centering
    \raisebox{0.4in}{
      \begin{tikzpicture}[scale=0.8] 
        \draw (0,0) ellipse (0.5 and 1.25); 
        \draw (0,-1.25) -- (5, -1.25); 
        \draw (5,-1.25) arc (-90:90:0.5 and 1.25); 
        \draw [dashed] (5,1.25) arc (90:270:0.5 and 1.25); 
        \draw (5,1.25) -- (0,1.25); 
      \end{tikzpicture}
    } 
    \caption{}
    \label{fig:std_cyl}
  \end{subfigure}%
  ~
  \begin{subfigure}[t]{0.45\textwidth}
    \centering
    \begin{tikzpicture}[scale=0.8]
      \draw[->,thick] (-1.25,-2.5) -- (-1.25,3);
      \draw[->,thick] (-2,-2) -- (5,-2);
      \draw (-1.25,-1.25) -- (4, -1.25); 
      \draw (-1.25,1.25) -- (4,1.25); 
      \node [left] at (-1.25,-1.25) {$-1$};
      \node [left] at (-1.25, 1.25) {$1$};
    \end{tikzpicture}
    \caption{}
    \label{fig:std_cyl_cerf}
  \end{subfigure}
  \caption{Left: The cylinder $I \times S^1$. Right: the Cerf diagram of the constant one-parameter family of height functions on the circle.}
  \label{fig:cylinder}
\end{figure}

This multiparameter persistence module can also be visualized as shown in
Figure~\ref{fig:pm-cylinder}. The blue region is the support of $H_0F$ and the red
region is the support of $H_1F$. These two regions are unbounded, analogous to sub-level set
persistence of the standard height function on $S^1$. Since each of $H_0F$ and $H_1F$ are
indecomposable and at most one-dimensional, this visualization also shows
the structure of the multiparameter persistence module $H_*F$.

\begin{figure}[h]
  \centering
  \tdplotsetmaincoords{70}{120}
  \begin{tikzpicture}[tdplot_main_coords,scale=0.8]
    \draw[->] (0,0,0) -- (-2,0,0) node[anchor=north east,left] {$a$};
    \draw[->] (0,0,0) -- (0,8,0) node[anchor=north west, below right]{$c$};
    \draw[->] (0,0,0) -- (0,0,3) node[anchor=south]{$b$};
    \newcommand\dent{.2}
    \begin{scope}[xshift=5.5ex,yshift=-1.2ex]
      \draw[black] (.2,0,0) -- (-.2,0,0);
      \node[below left] at (0,0,0) {$-1$};
      \draw(.2,4,0) -- (-.2,4,0);
      \node[below left] at (0,4,0){$1$};
      \draw[fill=cblue,fill opacity=.8,draw=dblue] (0,0,0) -- (0,0,2) -- (-2,0,2) -- (0,0,0);
      \draw[fill=cblue,fill opacity=.8,draw=dblue] (0,1,0) -- (0,1,2) -- (-2,1,2) -- (0,1,0);
      \draw[fill=cblue,fill opacity=.8,draw=dblue] (0,2,0) -- (0,2,2) -- (-2,2,2) -- (0,2,0);
      \draw[color =dblue] (-2,0,2)--(-2,2,2);
      \draw[color =dblue] (0,0,2)--(0,2,2);
      \draw[color =dblue] (0,0,0)--(0,2,0);
      \draw[color =dblue,dashed] (-2,2,2) -- (-2,3.5,2);
      \draw[color =dblue,dashed] (0,2,2)--(0,3.5,2);
      \draw[color =dblue,dashed] (0,2,0)--(0,3.5,0);
      \fill [cblue,fill opacity = .3](0,0,0) -- (0,2,0) -- (0,2,2) -- (0,0,2);
      \fill [cblue,fill opacity = .3] (-2,2,2) -- (-2,0,2) -- (0,0,2) -- (0,2,2);
      \shade [left color=cblue, right color = white,opacity=.3] (0,2,0) -- (0,2,2) -- (0,3.5,2) -- (0,3.5,0);
    \shade [left color=cblue, right color = white,opacity=.3,shading angle=50] (0,2,2) -- (0,3.5,2) -- (-2,3.5,2) -- (-2,2,2); 
  \draw[fill=Red,fill opacity=.2,draw=Red] (0,4,0) -- (0,4,2) -- (-2,4,2) -- (0,4,0);
  \draw[fill=Red,fill opacity=.2,draw=Red] (0,5,0) -- (0,5,2) -- (-2,5,2) -- (0,5,0);
  \draw[fill=Red,fill opacity=.2,draw=Red] (0,6,0) -- (0,6,2) -- (-2,6,2) -- (0,6,0);
  \draw[color =Red] (-2,4,2)--(-2,6,2);
  \draw[color =Red] (0,4,2)--(0,6,2);
  \draw[color =Red] (0,4,0)--(0,6,0);
  \draw[color =Red,dashed] (-2,6,2) -- (-2,7,2);
  \draw[color =Red,dashed] (0,6,2)--(0,7,2);
  \draw[color =Red,dashed] (0,6,0)--(0,7,0);
  \fill [Red,fill opacity = .4](0,4,0) -- (0,6,0) -- (0,6,2) -- (0,4,2);
  \fill [Red,fill opacity = .4] (-2,6,2) -- (-2,4,2) -- (0,4,2) -- (0,6,2);
  \shade [left color=Red, right color = white,opacity=.4] (0,6,0) -- (0,6,2) -- (0,7,2) -- (0,7,0);
\shade [left color=Red, right color = white,opacity=.4,shading angle=70] (0,6,2) -- (-2,6,2) -- (-2,7,2) -- (0,7,2); 
    \end{scope}
  \end{tikzpicture}
  \caption{The multiparameter persistence module of the constant $1$-parameter family of height functions on the circle.
      The Betti functions are constant for $0 \leq a \leq b \leq 1$. $\beta_0=0$ for $c < -1$ and $\beta_0=1$ for $c \geq -1$ (blue).  $\beta_1 = 0$ for $c < 1$ and $\beta_1=1$ for $c \geq 1$ (red).
  For $H_0 F$, all linear maps within the blue region are the identity map. Similarly, for $H_1 F$, all linear maps with the red region are the identity map. That is, both $H_0 F$ and $H_1 F$ are one-dimensional persistence modules supported on semi-infinite triangular prisms in which all non-trivial maps are identity maps.}
  \label{fig:pm-cylinder}
\end{figure}
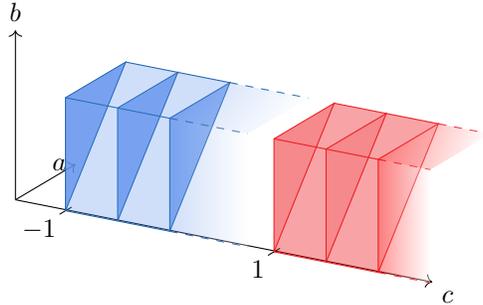






\section{Analyzing diagrams of spaces}
\label{sec:extra}
Let $X$ be a smooth, compact manifold.
For generic 1-parameter
families of
smooth
functions $\tilde{f}: I \times X \to \R$,
the nondegenerate critical points of the fibers $\tilde{f}_t$ occur
in families themselves, as can be seen in the arcs of the Cerf
diagrams of Section~\ref{sec:examples}.

\begin{definition} \label{def:positive-negative}
We say that 
such a critical point
is \emph{positive} if 
the curve
in the Cerf
diagram
containing its value
has positive slope (is locally strictly increasing).
Similarly, say that such a critical point is
\emph{negative} if
the curve in the Cerf diagram containing its  value
has negative slope (is locally
decreasing). There can be points 
that are neither positive nor
negative, e.g., the maximum or minimum of the singular locus of a wrinkle. 
\end{definition}

Recall that $f:I \times X \to I \times \R$ is given by $f(t,x) =
(t,\tilde{f}(t,x)) = (t,\tilde{f}_t(x))$ and $F: \cat{\Delta^2 \times R} \to
\cat{Top}$ is given by $F(a,b,c) = f^{-1}([a,b] \times (-\infty,c])$.
For $(a,b,c) \in \Delta^2 \times \R$, let
$F_0(a,b,c) = f^{-1}([a,b] \times \{c\})$.

\begin{theorem}
  \label{prop:morse_proj}
  Suppose $\tilde f:I \times X \to \R$ is a generic 1-parameter family of functions on a 
  smooth, compact manifold $X$.
  Let $0 \leq a < b \leq 1$ and $c \in \R$.
  Then $(F(a,b,c),F_0(a,b,c))$ is a cobordism between $(F(a,a,c),F_0(a,a,c))$ and $(F(b,b,c),F_0(b,b,c))$.
  Assume that
  there are no critical points in $F_0(a,a,c)$ and $F_0(b,b,c)$.
  Then the projection onto $[a,b]$
  \begin{equation}
    \pi_{[a,b]} : F(a,b,c) \to [a,b]
  \end{equation}
  is a Morse function on the cobordism $(F(a,b,c),F_0(a,b,c))$.
  Furthermore, $\pi_{[a,b]}$ has no interior critical points.
  In addition, 
  positive and negative 
  critical points in
  $F_0(a,b,c)$
  are boundary stable and boundary unstable
  critical points of $\pi_{[a,b]}$, respectively.
\end{theorem}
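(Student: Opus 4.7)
The plan is to verify the conclusions in four stages --- cobordism structure, absence of interior critical points, nondegeneracy of the boundary critical points, and the positive/negative versus stable/unstable correspondence --- with a local normal-form argument doing most of the work. Throughout, ``generic'' includes that $c$ also avoids the finitely many critical values of cusp singularities of $\tilde f|_{[a,b] \times X}$.

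\textbf{Cobordism and critical locus.} The piece $Y = F_0(a,b,c)$ is cut out in $[a,b] \times X$ by the single equation $\tilde f(t,x) = c$, whose differential is $(\partial_t \tilde f,\, d_x \tilde f_t)$. At a point of $Y$ where $d_x \tilde f_t = 0$ we sit at a Cerf-diagram crossing of the line of height $c$; for generic $c$ this crossing is transverse, so $\partial_t \tilde f \neq 0$. At the slices $t \in \{a,b\}$, the hypothesis of the theorem gives $d_x \tilde f_t \neq 0$. Hence $d\tilde f \neq 0$ throughout $Y$, so $Y$ is a smoothly embedded hypersurface meeting $\{t=a\}$ and $\{t=b\}$ transversely, and $F(a,b,c)$ acquires the manifold-with-corners structure realising the claimed cobordism. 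On $\{\tilde f < c\}$, $\pi_{[a,b]}$ is a coordinate function, hence has no interior critical points; on $Y$, $d\pi_{[a,b]}|_{\ker d\tilde f}$ vanishes iff $d_x \tilde f_t = 0$, so its critical points are exactly the Cerf-diagram crossings of height $c$.

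\textbf{Normal form and stable/unstable classification.} At such a critical point $(t_0, x_0)$, by the fold normal form \eqref{eqn:fold_sing}, together with an affine reparametrization of the $\R$-factor of the codomain that re-introduces the slope of the Cerf arc, there are local coordinates in which
\[
  \tilde f(t,x) = c + \alpha(t-t_0) - \sum_{i=1}^{j} x_i^2 + \sum_{i=j+1}^{n} x_i^2, \qquad \alpha = \partial_t \tilde f(t_0, x_0) \neq 0,
\]
with $\alpha$ equal to the slope of the Cerf arc at this point. Introducing $u = \tilde f - c$ gives $F(a,b,c) = \{u \leq 0\}$ locally and $\pi_{[a,b]}(u,x) = t_0 + \alpha^{-1}(u + \sum_{i=1}^{j} x_i^2 - \sum_{i=j+1}^{n} x_i^2)$; restricting to $Y = \{u=0\}$ yields the nondegenerate quadratic $\pi_{[a,b]}|_Y = t_0 + \alpha^{-1}(\sum_{i=1}^{j} x_i^2 - \sum_{i=j+1}^{n} x_i^2)$, so each boundary critical point is Morse. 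With the metric making $(u, x_1,\ldots,x_n)$ orthonormal and a gradient-like vector field for $\pi_{[a,b]}$ whose $u$-component is proportional to $-u$ (so tangent to $Y$ on $\{u = 0\}$, cf.\ \cite{bor:mor:2016}), integrating the linearized flow shows: for $\alpha > 0$ the unstable eigendirections are $x_1, \ldots, x_j$ only, so $W^u \subset Y$ --- boundary stable; for $\alpha < 0$ the $u$-direction is also unstable, so $W^u$ exits $Y$ transversely --- boundary unstable. Since positive slope means $\alpha > 0$ and negative slope means $\alpha < 0$, this matches the claim.

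The main obstacle is the last step: setting up rigorously a gradient-like vector field that is tangent to $Y$ on $Y$ and whose local linearization reproduces the stable/unstable dimensions required by Proposition~\ref{prop:morse-thy-bdy-cob}. Both are standard ingredients in Morse theory on cobordisms with boundary, and once the local normal form is in hand, the sign of $\alpha$ carries the full classification.
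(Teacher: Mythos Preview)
Your proof is correct and runs on the same core ingredient as the paper --- the fold normal form --- but packages the stable/unstable classification differently. The paper argues the two cases asymmetrically: for a negative critical point it exhibits the arc of critical points of $\tilde f_t$ itself as a path in $F(a,b,c)$ that leaves $Y$ transversely, witnessing $T_z W^u_z \not\subset T_z Y$ directly; for a positive critical point it argues that $\tilde f$ increases along the flow of $\nabla\pi_{[a,b]}$ in a neighborhood of $z$, so forward flow from $Y$ stays in $Y$ and the unstable manifold is tangent to $Y$. You instead pass to half-space coordinates $(u,x)$ with $u=\tilde f - c \le 0$, write $\pi_{[a,b]}$ explicitly as $t_0 + \alpha^{-1}(u + \text{quadratic})$, and read off both cases at once from the sign of $\alpha$ via the linearized flow. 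Your route has the advantage of making the nondegeneracy of the boundary critical points explicit (the paper leaves this implicit) and of treating positive and negative symmetrically; the paper's path argument for the negative case is more geometric and sidesteps the need to build a gradient-like vector field tangent to $Y$.

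One small correction: an \emph{affine} reparametrization of the $\R$-factor alone cannot reintroduce the slope, since in the fold normal form \eqref{eqn:fold_sing} the second component is independent of $t$. What you actually want is the parametrized Morse lemma, which gives $\tilde f(t,x) = g(t) + Q(x)$ in suitable fiber coordinates with $g(t_0)=c$ and $g'(t_0)=\alpha$; only the linear part of $g$ enters the second-order analysis at $z$, so your displayed normal form is valid for that purpose.
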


\begin{proof}
  The projection $\pi_{[a,b]}: [a,b] \times X \to [a,b]$ is a 
  submersion and hence has no
  critical points. Therefore, all critical points of the restriction
  $\pi_{[a,b]}: F(a,b,c) \to [a,b] \subset I$ must lie on the boundary
  $Y= F_0(a,b,c)$.

  Consider a 
  nondegenerate critical point $z$ of $\tilde{f}_t$
  with $z \in Y$.
  Near this 
  point
  there exists a 
  coordinate system
  for which it is a fold singularity of $\tilde{f}$ given by
    Eq.~\eqref{eqn:fold_sing}.  In this coordinate system,
  $\pi_{[a,b]}(t,x) =t$, so $d\pi_{[a,b]} = [\, 1 \, 0 \, \cdots \, 0 \, ]$.
  For simplicity, assume that
  $(z,f(z))$ is at the origin.
  Then the level set $\{0\} \times \{x_1^2 + \cdots + x_j^2 - x_{j+1}^2 - \cdots - x_{n-1}^2 = 0\}$
  has tangent space contained in $\{0\} \times \R^{n-1}$ and hence lies $\ker d\pi_{[a,b]}$. Therefore,
  $z$ is a critical point
  of $\pi_{[a,b]}$.


  Suppose 
  $z$ is a negative critical point of $\tilde{f}_t$
  and $z
  \in Y$. There exists a path $\alpha: \R \to 
  I \times X$ 
whose image consists of points $(t,x)$ where $x$ is a critical point of $\tilde{f}_t$
 so that $\alpha(0) = z$, $\tilde f(\alpha(-t)) > c$ and $\tilde
  f(\alpha(t)) < c$ for $t > 0$ (i.e., a parametrization of the preimage of an arc in
  the Cerf diagram containing the image of $z$). Thus, $\alpha$ restricted to
  $[0,\infty)$ provides a path in $F(a,b,c)$ so that $f(\alpha(t)) \not \in Y$
  for $t>0$, and therefore, $T_z W^u_z \not \subset T_z Y$. Hence, $z$ is a
  boundary unstable critical point for $\pi_{[a,b]}$.

  On the other hand, suppose
  $z$ is a positive critical point of $\tilde{f}_t$ and $z \in Y$.
  Near $z$,
    there exists a coordinate system on $I \times X$ of the form prescribed by
    Eq.~\eqref{eqn:fold_sing}. In this coordinate system, we find $\pi_{[a,b]}(t,x) = t$
    and thus $d \pi_{[a,b]} = [\, 1 \, 0 \, \cdots \, 0 \, ]$.
    Since $z$ is a positive 
    critical point,
    If we take a sufficiently small such neighborhood $U$, then
    the $\tilde f$ function values will increase along the flow lines of 
    $\nabla \pi_{[a,b]}$. Precisely, if $\xi: \R \times I \times X \to I \times X$
    denotes the flow generated by $\nabla \pi_{[a,b]}$, then we have 
    $\tilde f( \xi (\epsilon,t,x) ) \geq \tilde f (t,x)$ for $\epsilon \geq 0$ and $(t,x) \in U$.
    This inequality holds in the restriction to $U \cap F(a,b,c)$.
    Hence, points on $Y \cap U = F_0(a,b,c) \cap U$ must flow to other
    points on $Y$ under $\xi$. In particular, $U \cap W^s_z \subset Y$ and
    hence $T_z W^s_z \subset T_z Y$.
\end{proof}

  \begin{remark}
    \cref{prop:morse_proj} does not address the case when
    $F_0(a,b,c)$ contains nondegenerate critical points that are neither positive nor negative
    or the case that $F_0(a,b,c)$ contains cusp singularities.
  \end{remark}


The remainder of this section and the next section are dedicated
  to showing how the theory developed thus far and in particular, Theorem~\ref{prop:morse_proj},
  can be applied to examples.

  \begin{example}
    Consider the function $\tilde{g}(t) = 2\min(t,1-t)$ in 
    Section~\ref{sec:pmod_func}.
    Let $X = \{*\}$
    and define $\tilde{f}:I \times X \to \R$ to be given by $\tilde{f}(t,*) = \tilde{g}(t)$
    and define $f:I \times X \to I \times \R$ to be given by $f(t,*) = (t,\tilde{g}(t))$.
    For $f$, we have the associated
    diagram of topological spaces $F:
    \uD \times \uR \to \Top$.

    By definition, $\ast$ is a critical point of
    $\tilde g_t$ for all $t \in I$.
    Let $0 \leq a < b \leq 1$ and let $c \in \R$.
    In the case
    $a < \frac{1}{2}$ and $2a < c < \tilde{g}(b)$, then
    $F_0(a,b,c)$ 
    has a positive critical point
    and 
    Proposition~\ref{prop:morse_proj} implies this intersection coincides with
    a boundary stable critical point of $\pi_{[a,b]}$. Proposition~\ref{prop:morse-thy-bdy-cob}
    implies $F(a,b,c)$ is a left-product cobordism.
    Note that $F(b,b,c)$ is empty.
    {Similarly, if $b > \frac{1}{2}$ and $2(1-b) < c < \tilde{g}(a)$
      then we have}
    a single boundary
    unstable critical point for $\pi_{[a,b]}$ and $F(a,b,c)$ is a right-product cobordism.
%
    In the case that $a < \frac{1}{2} < b$ and $2a, 2(1-b) < c < 1$, we have that
    $F(a,b,c)$ is a cobordism between the singletons $F(a,a,c)$ and
    $F(b,b,c)$. This is not a product cobordism, however, since the projection
    $\pi_{[a,b]}$ has both a boundary stable and a boundary unstable critical point.
    Note that $F(\frac{1}{2},\frac{1}{2},c)$ is empty.
\end{example}

\section{Examples II: the wrinkled cylinder}
\label{sec:wrinkled-cylinder}
\input{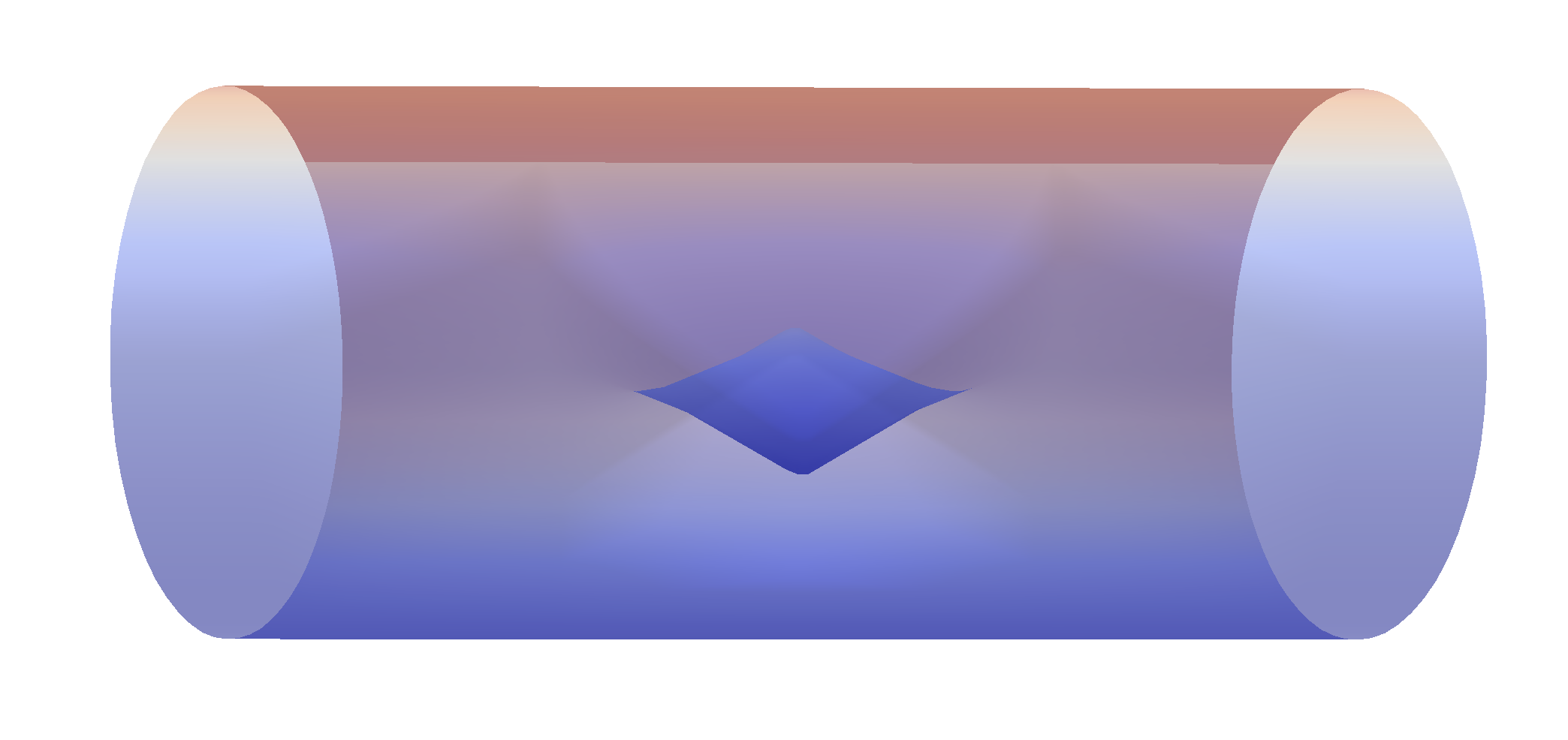}

\subsection*{Acknowledgments}
This material is based upon work supported by, or in part by, the Army Research Laboratory and the Army Research Office under contract/grant number W911NF-18-1-0307.

\printbibliography


\end{document}